\newcommand{\Hom}{\operatorname{Hom}}
\newcommand{\dist}{\operatorname{dist}}
\newcommand{\bbZ}{\mathbb{Z}}
\newcommand{\bbN}{\mathbb{N}}
\newcommand{\bbR}{\mathbb{R}}
\newcommand{\pe}{p^e -1}
\newcommand{\dis}{\operatorname{disp}}
\newcommand{\disp}{\operatorname{disp}}
\newcommand{\len}{\operatorname{len}}
\newcommand{\mlen}{\operatorname{mlen}}
\newcommand{\w}{\xi}
\newcommand{\xw}{\underline{x}^{\xi}}
\newcommand{\xwp}{\underline{x}^{\xi'}}
\newcommand{\xwpp}{\underline{x}^{\xi''}}
\newtheorem{theorem}{Theorem}[section]
\newtheorem{proposition}[theorem]{Proposition}
\newtheorem{corollary}[theorem]{Corollary}
\newtheorem*{mainthm}{Main Theorem}
\newtheorem*{prop:levelcase}{Proposition \ref{levelcase}}
\theoremstyle{definition}
\newtheorem{definition}[theorem]{Definition}
\newtheorem{example}[theorem]{Example}
\theoremstyle{remark}
\newtheorem{remark}[theorem]{Remark}
\newtheorem{question}[theorem]{Question}
\journal{Journal of Pure and Applied Algebra}
\begin{document}

\begin{frontmatter}

\title{The Frobenius complexity of Hibi rings \tnoteref{t1}}
\tnotetext[t1]{\copyright 2018. This manuscript version is made available under the CC-BY-NC-ND 4.0 license \url{http://creativecommons.org/licenses/by-nc-nd/4.0/}.}

\author[label1]{Janet Page}
\address[label1]{Department of Mathematics, Statistics, and Computer Science, University of Illinois at Chicago, Chicago, IL 60607, USA}
\ead{jpage8@uic.edu}

\begin{abstract}
We study the Frobenius complexity of Hibi rings over fields of characteristic $p > 0$.  In particular, for a certain class of Hibi rings (which we call $\omega^{(-1)}$-level), we compute the limit of the Frobenius complexity as $p \rightarrow \infty$.
\end{abstract}

\begin{keyword}
Hibi rings \sep Frobenius complexity \sep rings of Frobenius operators \sep Cartier algebras \sep level rings

\MSC[2010] 13A35 \sep 05E40 \sep 06A11 \sep 13H10 \sep 14M25
\end{keyword}

\end{frontmatter}

\section{Introduction}
Central to the study of singularities in characteristic $p$ is the Frobenius morphism and its splittings.  Given a commutative ring $R$ of positive characteristic, the total Cartier algebra ($\mathcal{C}(R)$) is the graded, noncommutative ring of all potential Frobenius splittings of $R$, and it has been studied in various contexts in its relation to singularities \cite{LSTestIdeal}, \cite{STestIdealsinNonQGorRings}, \cite{BTestIdealsofpeLinearMaps}.  Unfortunately, this ring is not finitely generated over $R$, even for relatively nice rings \cite{MBZFrobandCartAlgofSRRings}, \cite{KSSZRingsofFrobOp}, \cite{KAnExofNonfingenalgofFrobMaps}, but we can study the degree to which it is non-finitely generated.  Specifically, Enescu and Yao defined a measure of the non-finite generation of this ring \cite{EYTheFrobcompofalocalringofprimechar}, called the Frobenius complexity ($cx_F(R)$), and they computed it for Segre products of polynomial rings.  No other examples have been computed, and it is difficult to compute in a fixed characteristic.  In particular, Enescu and Yao found that the Frobenius complexity of a Segre product is not even always rational.  However, when they varied the base field and took a limit as $p \rightarrow \infty$,  they found the limit Frobenius complexity is an integer in every case they studied. We will focus on computing the limit Frobenius complexity for a class of toric rings called Hibi rings, which are defined using finite posets.  We will be able to compute it for Hibi rings which have a property we call $\omega^{(-1)}$-level, a condition related to the level condition which has been studied for Hibi rings in \cite{MASuffCondforaHibiRingtobeLevel} and \cite{MOnGenCanModuleHibiRing}.
Our main theorem shows that the limit Frobenius complexity is an integer for $\omega^{(-1)}$-level Hibi rings, and in fact it can be read off directly from the poset.  Specifically, we have the following.

\begin{mainthm}[Theorem \ref{cxflevel}]
If $R = \mathcal{R}_{\mathbb{F}_p}[\mathcal{I}(P)]$ is an $\omega_R^{(-1)}$-level (but non Gorenstein) Hibi ring associated to a poset $P$ over $\mathbb{F}_p$, then
\begin{equation*}
    \lim_{p \rightarrow \infty} cx_F(R) = \# \{\text{elements of P which are not in a maximal chain of minimal length}\}.
\end{equation*}
\end{mainthm}

Otherwise, in the Gorenstein case, we know $\mathcal{C}(R)$ is finitely generated over $R$, which means $cx_F(R) = -\infty$ \cite{LSTestIdeal}.  As a particular case of this theorem, we recover the result of Enescu and Yao on the limit Frobenius complexity of Segre products of polynomial rings.

Frobenius complexity quantifies the minimal number of generators of $\mathcal{C}(R)_e$ for any $e$, which cannot be written as products of elements of lower degrees.  We will give an upper bound  on the number of generators of $\mathcal{C}(R)_e$ using the toric structure of Hibi rings.  Then, we will use base $p$ expansion techniques to give a lower bound by explicitly finding generators which are not products of elements of lower degrees.  We will show these have the same order when our Hibi ring is $\omega^{(-1)}$-level.

\textbf{Acknowledgments:} This work was partially supported by NSF RTG grant DMS-1246844.  I would like to thank my advisor, Kevin Tucker, for his constant support and guidance, and Alberto Boix, J\"urgen Herzog, Chelsea Walton, Wenliang Zhang, and Dumitru Stamate for useful conversations and feedback.

\section{Background}
\subsection{Frobenius complexity}
Let $R$ be a ring of characteristic $p > 0$.  Then for any $R$-module $M$, we can consider the set of $p^{-e}$-linear maps on $M$, namely all maps $\psi: M \rightarrow M$ such that 
\begin{align*}
\psi(r^{p^e}m) &= r\psi(m) \text{ and } \\
\psi(m_1 + m_2) &= \psi(m_1) + \psi(m_2)   
\end{align*}
which we will denote $\mathcal{C}^e(M)$.

Similarly, we could consider the set of $p^e$-linear maps on $M$ which we denote $\mathscr{F}^e(M)$.  Namely, these are the maps $\phi: M \rightarrow M$ such that
\begin{align*}
\phi(rm) &= r^{p^e}\phi(m) \text{ and} \\
\phi(m_1 + m_2) &= \phi(m_1) + \phi(m_2)    
\end{align*}

Let $F^e: R \rightarrow R$ be the iterated Frobenius map, and let $F_*^eR$ denote the $R$-module which is isomorphic to $R$ as a set (we write elements in $F_*^eR$ as $F_*^er$ for some $r \in R$), but with an $R$-module structure given by:
\begin{equation*}
    r \cdot F_*^{e}x := F_*^e( r^{p^e}x) \text{ for all } r,x \in R
\end{equation*}
Similarly, for an $R$-module $M$, we let $F_*^eM$ be the $R$ module which agrees with $M$ as a set and has the multiplication structure $r \cdot F_*^e m = F_*^e(r^{p^e}m)$.

We can identify:
\begin{equation*}
    \mathcal{C}^e(M) \cong \Hom_R(F_*^eM,M)
\end{equation*}
and similarly 
\begin{equation*}
    \mathscr{F}^e(M) \cong \Hom_R(M, F_*^eM)
\end{equation*}

\begin{definition}(\cite{STestIdealsinNonQGorRings},\cite{LSTestIdeal}) Let $\mathcal{C}(M) = \oplus_{e} \mathcal{C}^e(M)$.  We call this the Cartier algebra on $M$.
Similarly, let $\mathscr{F}(M) = \oplus_{e} \mathscr{F}^e(M)$, which we call ring of Frobenius operators on $M$.
\end{definition}

We note that if $\phi_1 \in \mathscr{F}^{e_1}(M)$ and $\phi_2 \in \mathscr{F}^{e_2}(M)$ then $\phi_2 \circ \phi_1 (rm) = \phi_2(r^{p^{e_1}}\phi_1(m)) = r^{p^{e_1 + e_2}} \phi_2 \circ \phi_1 (m)$ so that $\phi_2 \circ \phi_1 \in \mathscr{F}^{e_1 + e_2}(M)$, and $\mathscr{F}(M)$ forms a graded ring (and likewise for $\mathcal{C}(M)$).

When $R$ is a complete, local, and $F$-finite ring, and $E = E_R(k)$, we have \cite{MBZFrobandCartAlgofSRRings}:
\begin{equation*}
\mathscr{F}(E)^{op} \cong \mathcal{C}(R)
\end{equation*}
We will define Frobenius complexity as a measure of the non-finite generation of $\mathcal{C}(R)$; however, when $R$ is complete and local this is equivalent to defining the same notion for $\mathscr{F}(E)$.

For any $\bbN$-graded ring $A = \oplus A_e$, let $G_e(A)$ be the subring of A generated by the elements of degree $\leq e$ and let $G_{-1} = A_0$.

\begin{definition}Let $c_e = c_e(A)$ denote the minimal number of homogeneous generators of $A_e/(G_{e-1}(A))_e$ over $A_0$.  We say $A$ is degree-wise finitely generated if $c_e < \infty$ for all $e$.
When $A$ is a degree-wise finitely generated, the sequence $\{c_e\}_e$ is called the \textbf{complexity sequence} for $A$.
\end{definition}

\begin{definition}\cite{EYTheFrobcompofalocalringofprimechar}
If $R$ is an $F$-finite ring of characteristic $p > 0$, we say the \textbf{Frobenius complexity} of $R$ is
$$
cx_F(R) = \inf\{ \alpha \in \bbR_{> 0} : c_e(\mathcal{C}(R)) = \mathcal{O}(p^{\alpha e})\}.
$$
\end{definition}

Throughout this paper, $R$ will be a normal ring, and we will denote its canonical module $\omega_R$.  When there is no confusion, we will write $\omega$ in place of $\omega_R$.  Let $\omega^{(-n)} := \Hom_R(\omega^{(n)}, R)$.  To compute Frobenius complexity, we will be using the following correspondence \cite{MRFrobeniussplittingandcohovanishing}, \cite{SZFrobspilttingincomalg}.  We state a version similar to Theorem 3.3 in \cite{KSSZRingsofFrobOp}, but note that we can state it more generally for $\mathcal{C}(R)$ which is isomorphic to $\mathscr{F}(E)^{op}$ when $R$ is complete and local.

\begin{theorem}\label{structureofCR}
Let $R$ be a normal, local ring of characteristic $p > 0$ with canonical module $\omega$. Then there is an isomorphism of graded rings
    \begin{equation*}
        \mathcal{C}(R) \cong T(\oplus_{n \geq 0} \omega^{(-n)})
    \end{equation*}
where if $A = \oplus_n A_n$ is a graded ring of characteristic $p > 0$, then  $T(A)$ is the graded ring with $e^{th}$ graded piece $T(A)_e = A_{p^{e} -1}$ with the following multiplication structure:
\begin{equation*}
    a * b = a^{p^{e'}}b \text{ for } a \in T(A)_e, b \in T(A)_{e'}
\end{equation*}
\end{theorem}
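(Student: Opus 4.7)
The plan is to prove the isomorphism one graded piece at a time and then verify compatibility with the ring structure. For each $e$, Theorem \ref{structureofCR} asserts that $\mathcal{C}^e(R) = \Hom_R(F_*^e R, R)$ is isomorphic to $\omega_R^{(-(p^e-1))} = \omega_R^{(1-p^e)}$, which is by construction the degree-$e$ piece $T\bigl(\bigoplus_{n\geq 0}\omega^{(-n)}\bigr)_e$. Once this graded isomorphism is in place, what remains is to show that composition of Cartier maps on the left corresponds to the twisted product $a * b = a^{p^{e'}}b$ on the right.

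For the degreewise identification, the standard input is the classical formula
\[
\Hom_R(F_*^e R, R) \;\cong\; F_*^e\,\omega_R^{(1-p^e)},
\]
obtained by applying Grothendieck duality to the finite morphism $F^e\colon R\to R$. Since $R$ is normal, the relevant dualizing module $F^{e!}R$ is reflexive of rank one and agrees with $\omega_R^{(1-p^e)}$ on the regular locus, hence everywhere by reflexivity. Forgetting the $F_*^e$ (which only changes the $R$-module structure, not the underlying abelian group) gives the set-level bijection $\mathcal{C}^e(R)\leftrightarrow\omega_R^{(1-p^e)}$.

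To verify the multiplication, I would take $\phi_1\in\mathcal{C}^{e_1}(R)$ and $\phi_2\in\mathcal{C}^{e_2}(R)$ corresponding to sections $a_1\in\omega^{(1-p^{e_1})}$ and $a_2\in\omega^{(1-p^{e_2})}$. The composition $\phi_1\circ\phi_2$ factors as $\phi_2\circ(F_*^{e_2}\phi_1)$, where $F_*^{e_2}\phi_1\in\Hom_R(F_*^{e_1+e_2}R,\,F_*^{e_2}R)$ is the Frobenius pushforward of $\phi_1$. Under the duality identification, this Frobenius twist corresponds to raising the section to its $p^{e_2}$-th reflexive power, so $F_*^{e_2}\phi_1$ is represented by $a_1^{p^{e_2}}\in\omega^{(p^{e_2}-p^{e_1+e_2})}$. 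Composing with $\phi_2$ then corresponds to multiplying sections in the symbolic anticanonical algebra $\bigoplus_n\omega^{(-n)}$, producing $a_1^{p^{e_2}}\cdot a_2\in\omega^{(1-p^{e_1+e_2})}$, which is exactly the $T$-multiplication.

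The main obstacle is this last step: the duality isomorphism is natural but hides choices (a local generator of $\omega$, a trace convention), and tracking the Frobenius twist through these identifications requires care. A clean way to handle it is to work first on the regular locus $U\subseteq\operatorname{Spec}R$, where $\omega|_U$ is invertible and the isomorphism becomes an explicit statement about sections of line bundles that can be verified by direct computation. The formula then extends uniquely to all of $R$ by reflexivity of the modules $\omega^{(n)}$ together with the fact that the complement of $U$ has codimension at least two in the normal ring $R$.
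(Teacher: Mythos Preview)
The paper does not actually supply a proof of this theorem: it is stated with citations to \cite{MRFrobeniussplittingandcohovanishing}, \cite{SZFrobspilttingincomalg}, and \cite{KSSZRingsofFrobOp} (Theorem~3.3), and then used as a black box. So there is no ``paper's own proof'' to compare against; your proposal is a sketch of the argument one finds in those references, and the overall strategy---degreewise identification via Grothendieck duality for the finite Frobenius, then verification of multiplicativity on the regular locus followed by extension by reflexivity---is the standard and correct one.

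One bookkeeping point deserves attention. You write that ``$\phi_1\circ\phi_2$ factors as $\phi_2\circ(F_*^{e_2}\phi_1)$,'' but under the identifications $\phi_i\in\Hom_R(F_*^{e_i}R,R)$ the composite of $p^{-e}$-linear endomorphisms $\phi_1\circ\phi_2$ (apply $\phi_2$ first) corresponds to $\phi_1\circ F_*^{e_1}\phi_2\colon F_*^{e_1+e_2}R\to F_*^{e_1}R\to R$, not to what you wrote. With your indexing you are actually computing $\phi_2\circ\phi_1$, which under duality gives $a_1^{p^{e_2}}a_2$; this does match the $T$-product $a_1*a_2$ as defined in the statement, so the end formula is right, but the labeling of which Cartier composite you are taking is reversed. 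It is worth straightening this out, because the isomorphism claimed is one of \emph{graded rings}, so which side corresponds to which order of multiplication matters (and is exactly the reason the paper remarks that $\mathcal{C}(R)\cong\mathscr{F}(E)^{\mathrm{op}}$ rather than $\mathscr{F}(E)$).
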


We will first give an upper bound on the limit Frobenius complexity.  In order to do this, we define a notion similar to analytic spread for the anticanonical, where we take the symbolic Rees algebra instead of the standard Rees algebra.

\begin{definition}
Let $(R,\mathfrak{m},k)$ be a normal, local ring with canonical $\omega$, and suppose $\oplus_{n \geq 0} \omega^{(-n)}$ is finitely generated.  
We define the \textbf{anticanonical symbolic spread} of $R$ to be:
\begin{equation*}
    sp_R(\omega^{(-1)}):= \dim\left(\bigoplus_{n \geq 0} \omega^{(-n)}\otimes k \right)
\end{equation*}
\end{definition}

We will show in Proposition \ref{wngenbyw} that $\bigoplus_{n \geq 0} \omega^{(-n)}$ is generated by $\omega^{(-1)}$ and the anticanonical symbolic spread matches up with the usual analytic spread for Hibi rings.  We will use this anticanonical symbolic spread to bound the limit Frobenius complexity in the following way.

\begin{proposition}\label{anticanonicalbound}
If $R$ is a normal, local ring over a field of characteristic $p$, and $\oplus_{n \geq 0} \omega^{(-n)}$ is finitely generated, then
\begin{equation*}
    \lim_{p \rightarrow \infty} cx_F(R) \leq sp_R(\omega^{(-1)}) -1
\end{equation*}
\end{proposition}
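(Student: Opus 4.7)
The plan is to bound $c_e(\mathcal{C}(R))$ \emph{pointwise} in $p$ by the minimal number of $R$-module generators of the degree-$e$ piece, then apply the standard Hilbert function estimate. Set $A := \bigoplus_{n \geq 0} \omega^{(-n)}$ and $s := sp_R(\omega^{(-1)})$; by hypothesis $A$ is a finitely generated $\bbN$-graded $R$-algebra with $A_0 = R$.

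\textbf{Step 1 (reduction to module generation).} By definition $c_e(\mathcal{C}(R))$ is the minimal number of $R$-generators of the quotient $\mathcal{C}(R)_e/(G_{e-1}(\mathcal{C}(R)))_e$, which is bounded above by $\mu_R(\mathcal{C}(R)_e)$, the minimal number of $R$-generators of $\mathcal{C}(R)_e$ itself. By Theorem \ref{structureofCR}, the graded ring isomorphism $\mathcal{C}(R) \cong T(A)$ provides an isomorphism of $R$-modules $\mathcal{C}(R)_e \cong A_{p^e-1}$. Since $(R,\m,k)$ is local, Nakayama's lemma yields
\begin{equation*}
\mu_R(\mathcal{C}(R)_e) \;=\; \dim_k\bigl((A \otimes_R k)_{p^e-1}\bigr).
\end{equation*}

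\textbf{Step 2 (Hilbert function bound).} Set $\bar{A} := A \otimes_R k$. Because $A$ is a finitely generated $\bbN$-graded $R$-algebra, $\bar{A}$ is a finitely generated $\bbN$-graded $k$-algebra; its Krull dimension is exactly $s$ by the definition of the anticanonical symbolic spread. The standard fact that the Hilbert function of a finitely generated graded $k$-algebra grows as a polynomial of degree one less than the Krull dimension then gives
\begin{equation*}
\dim_k \bar{A}_n \;=\; O(n^{s-1}).
\end{equation*}

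\textbf{Step 3 (conclusion).} Combining Steps 1 and 2, $c_e(\mathcal{C}(R)) \le \dim_k \bar{A}_{p^e-1} = O((p^e-1)^{s-1}) = O(p^{(s-1)e})$. Hence $cx_F(R) \le s - 1$ already for every fixed $p$, and passing to the limit yields the stated inequality.

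There is no substantial obstacle here. The only point requiring care is that Theorem \ref{structureofCR} is used to identify $\mathcal{C}(R)_e$ with $A_{p^e-1}$ as \emph{$R$-modules}, so that $R$-module generation (and thus Nakayama and the Hilbert function of $\bar{A}$) controls $c_e$; the Frobenius-twisted multiplication on $T(A)$ plays no role in the additive side of this argument. Note also that the bound $cx_F(R) \leq s - 1$ obtained in Step 3 is pointwise in $p$, making the limit statement an immediate corollary.
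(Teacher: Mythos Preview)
Your proof is correct and follows essentially the same approach as the paper's: bound $c_e(\mathcal{C}(R))$ by the minimal number of generators $\mu_R(\omega^{(1-p^e)})$ via Theorem~\ref{structureofCR}, then invoke polynomial growth of the Hilbert function of $A\otimes_R k$ in degree $s-1$. The paper phrases Step~2 slightly differently---it separates the standard-graded case (where the Hilbert function is eventually polynomial) from the general case (where it is eventually quasi-polynomial)---whereas you subsume both under the single estimate $\dim_k\bar{A}_n=O(n^{s-1})$; both formulations are valid, though your phrase ``grows as a polynomial'' should strictly read ``is bounded by a polynomial'' since $\bar{A}$ need not be standard graded.
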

\begin{proof}
Consider the function $h :\mathbb{N} \rightarrow \mathbb{N}$ such that $h(n)$ is the minimal number of generators of $\omega^{(-n)}$.  When $\oplus_{n \geq 0} \omega^{(-n)}$ is generated by $\omega^{(-1)}$, then for $n >> 0$, $h$ matches up with some polynomial of degree $sp_R(\omega^{(-1)}) -1$.  Otherwise, for $n >> 0$, $h(n)$ will match up with some quasi-polynomial, so that there are some polynomials $\{p_i\}$ and for each sufficiently large $n$, $h(n) = p_i(n)$ for some $p_i$.  Again, the degrees of the $p_i$ are bounded by $sp_R(\omega^{(-1)}) -1$.  Then since $c_e(\mathcal{C}(R)) \leq h(p^e - 1)$ by Theorem \ref{structureofCR}, we have that $\lim_{p \rightarrow \infty} cx_F(R) \leq sp_R(\omega^{(-1)}) -1$.
\end{proof}

\subsection{Hibi rings}

This paper will focus on studying the limit Frobenius complexity of Hibi rings, which are normal toric rings which can be defined by finite posets.  These rings are particularly nice to work with as they can be described combinatorially.

\begin{definition}
Consider a poset $P$.  We call $I$ a \textbf{poset ideal} of $P$ if $y \in I \implies x \in I$ for all $x \leq y$, $x,y \in P$.  Denote by $\mathcal{I}(P)$ the set of all poset ideals in $P$.
\end{definition}

Note that $\mathcal{I}(P)$ forms a distributive lattice with a natural ordering.  For $I,J \in \mathcal{I}(P)$, we say $I \leq J$ if $I \subset J$ as subsets of $P$.  If $I,J \in \mathcal{I}(P)$ are incomparable ($I \not\leq J$ and $J \not\leq I$), we write $I \not\sim J$.  Note that $\mathcal{I}(P)$ always has a unique minimal element, namely $\varnothing$.

By Birkhoff's Theorem, we know that for any finite distributive lattice $D$, we have $D \cong \mathcal{I}(P)$ for some poset $P$.

\begin{definition} \cite{HDisplat} Given a poset $P:= \{v_1,\dots,v_n\}$ and a field $k$, the \textbf{Hibi ring} associated to $P$ over $k$, denoted $\mathcal{R}_k[\mathcal{I}(P)] \subset k[t,x_{v_1},\dots,x_{v_n}]$, is the toric ring generated over $k$ by the monomials $x_I := t\prod_{v_i \in I} x_{v_i}$ for every $I \in \mathcal{I}(P)$. 
\end{definition}

Note that $\mathcal{R}_k[\mathcal{I}(P)]$ has a natural grading given by the degree of $t$.

\begin{theorem}\cite{HDisplat}\label{hibiiso}
For any Hibi ring, we have the following isomorphism:
$$\mathcal{R}_k[\mathcal{I}(P)] \cong \frac{k[X_\alpha | \alpha \in \mathcal{I}(P)]}{(X_\alpha X_\beta - X_{\alpha \wedge \beta}X_{\alpha \vee \beta} | \alpha \not\sim \beta)}$$
where $\alpha \vee \beta, \alpha \wedge \beta$ are the join and meet of $\alpha$ and  $\beta$ (least upper bound and greatest lower bound respectively). 
\end{theorem}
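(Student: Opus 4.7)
The plan is to build the isomorphism via the obvious surjection. Let $\phi : k[X_\alpha \mid \alpha \in \mathcal{I}(P)] \to \mathcal{R}_k[\mathcal{I}(P)]$ be the $k$-algebra map sending $X_\alpha \mapsto x_\alpha$; surjectivity is immediate since the $x_\alpha$ generate the Hibi ring by definition. The first step is to verify that each quadratic binomial $X_\alpha X_\beta - X_{\alpha \wedge \beta} X_{\alpha \vee \beta}$ lies in $\ker \phi$. Because $\mathcal{I}(P)$ is ordered by inclusion, its meet and join are intersection and union of subsets of $P$, so for $\alpha \not\sim \beta$ the two unordered pairs $\{\alpha, \beta\}$ and $\{\alpha \cap \beta,\, \alpha \cup \beta\}$ cover every $v \in P$ with the same multiplicity. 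Hence $x_\alpha x_\beta = x_{\alpha \wedge \beta} x_{\alpha \vee \beta}$ holds as an equality of monomials in $k[t, x_{v_1}, \ldots, x_{v_n}]$.

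Write $S$ for the quotient $k[X_\alpha]/(X_\alpha X_\beta - X_{\alpha \wedge \beta} X_{\alpha \vee \beta} \mid \alpha \not\sim \beta)$ and let $\bar\phi : S \to \mathcal{R}_k[\mathcal{I}(P)]$ be the induced surjection. To show $\bar\phi$ is an isomorphism, I would introduce \emph{standard monomials}, namely those $X_{\alpha_1} \cdots X_{\alpha_k}$ whose indices form a chain $\alpha_1 \leq \cdots \leq \alpha_k$ in $\mathcal{I}(P)$, and establish two facts: standard monomials span $S$ as a $k$-vector space, and their images under $\bar\phi$ are $k$-linearly independent in $\mathcal{R}_k[\mathcal{I}(P)]$.

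For the spanning step (a straightening argument), apply the defining relations to replace any incomparable pair $X_\alpha X_\beta$ occurring in a monomial by $X_{\alpha \wedge \beta} X_{\alpha \vee \beta}$. Termination is guaranteed by the strict increase of the weight $w = \sum_i |\alpha_i|^2$ on the index multisets: writing $a = |\alpha|,\ b = |\beta|,\ c = |\alpha \wedge \beta|,\ d = |\alpha \vee \beta|$, one computes
\[
c^2 + d^2 - a^2 - b^2 \;=\; 2(a-c)(b-c),
\]
which is positive because incomparability forces $c < a$ and $c < b$. Since $w$ is bounded above by $k \cdot |P|^2$ on monomials of fixed degree $k$, the rewriting process must stop, expressing the original monomial as a scalar multiple of a standard monomial.

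For linear independence of the images, observe that a standard monomial $X_{\alpha_1} \cdots X_{\alpha_k}$ with $\alpha_1 \subseteq \cdots \subseteq \alpha_k$ maps to $t^k \prod_{v \in P} x_v^{n_v}$, where $n_v = \#\{i : v \in \alpha_i\}$. Because the $\alpha_i$ form an ascending chain, $v$ first enters at index $k - n_v + 1$, so the chain is recovered from the exponent vector as $\alpha_i = \{v \in P : n_v \geq k - i + 1\}$. Distinct standard monomials therefore have distinct monomial images, proving independence. The main obstacle is the straightening bookkeeping in the previous paragraph; once it terminates, combining the two facts forces $\ker \phi$ to coincide exactly with the displayed Hibi ideal, and $\bar\phi$ is the desired isomorphism.
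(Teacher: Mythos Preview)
Your proof is correct. The paper itself does not prove this theorem; it simply quotes the result from Hibi's original paper \cite{HDisplat}, so there is no in-paper argument to compare against. What you have written is essentially the classical straightening-law proof due to Hibi: one checks the Hibi relations hold in the toric ring, then shows that the standard (chain-indexed) monomials span the quotient and map to distinct monomials in $k[t,x_{v_1},\dots,x_{v_n}]$. Your termination weight $\sum_i |\alpha_i|^2$ with the identity $c^2+d^2-a^2-b^2 = 2(a-c)(b-c)$ is a clean way to handle the straightening step, and the recovery formula $\alpha_i = \{v : n_v \ge k-i+1\}$ correctly establishes injectivity on standard monomials.
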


\begin{definition}
For a poset $P$, we say $v_j$ \textbf{covers} $v_i$ when $v_i < v_j$ and there is no element $v_k$ such that $v_i < v_k < v_j$.  We denote this by $v_i \lessdot v_j$.
\end{definition}

We call a totally ordered subset of $P$ a chain, and we say $P$ is pure if all maximal chains have the same number of elements.  Hibi showed the following.

\begin{theorem} \cite{HDisplat} \label{pure}
A Hibi ring $R = \mathcal{R}_k[\mathcal{I}(P)]$ is Gorenstein if and only if $P$ is pure.
\end{theorem}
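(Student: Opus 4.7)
The plan is to invoke the criterion that a Cohen--Macaulay graded $k$-algebra is Gorenstein if and only if its canonical module is free of rank one. Since $R = \mathcal{R}_k[\mathcal{I}(P)]$ is a normal standard-graded toric domain, and normal toric rings are Cohen--Macaulay, the question reduces to determining precisely when the canonical module $\omega_R$ is principal.

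The first step is to identify $R$ with the Ehrhart ring of the order polytope $\mathcal{O}(P) = \{f : P \to [0,1] \mid f \text{ is order-preserving}\}$. Under this identification, the degree-$n$ component of $R$ has a $k$-basis indexed by lattice points of $n \cdot \mathcal{O}(P)$, equivalently by order-preserving maps $P \to \{0,1,\ldots,n\}$; this is consistent with the multichain description of $R$ coming from Theorem~\ref{hibiiso}. By Danilov--Stanley, $\omega_R$ is then the monomial ideal whose degree-$n$ component is spanned by monomials indexed by \emph{strictly} order-preserving maps $P \to \{1,2,\ldots,n-1\}$, i.e.\ the relative interior lattice points of the cone over $\mathcal{O}(P)$.

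Next I would analyze principality combinatorially. If $P$ is pure of rank $r$, then the rank function $\rho : P \to \{1,\ldots,r+1\}$ is a strictly order-preserving map of minimal total degree, and every other strictly order-preserving map decomposes as $\rho$ plus an order-preserving map into $\mathbb{Z}_{\geq 0}$; translating back, every generator of $\omega_R$ is a multiple of the monomial $x_\rho$, so $\omega_R = R \cdot x_\rho$ and $R$ is Gorenstein. Conversely, if $P$ is not pure and contains maximal chains of two distinct lengths $\ell_1 < \ell_2$, restricting strict order-preserving maps along each chain forces two non-redundant ``minimal'' generators of $\omega_R$ sitting in different graded pieces, so $\omega_R$ cannot be principal.

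The main obstacle is the converse direction. Rather than argue ad hoc, I would invoke Stanley's reflexivity criterion for Gorensteinness of Ehrhart rings: a rescaled polytope $c \cdot \mathcal{O}(P)$ is reflexive about some interior lattice point if and only if every facet of $\mathcal{O}(P)$ lies at the same lattice distance from that point. Because the facets of $\mathcal{O}(P)$ are cut out by the covering relations in the augmented poset $\hat{P} = P \cup \{\hat{0},\hat{1}\}$, this equidistance condition translates directly into the assertion that every maximal chain of $\hat{P}$ has the same length, i.e.\ that $P$ is pure; this is the step that ties the combinatorics of $P$ to the reflexivity of $\mathcal{O}(P)$ and closes the equivalence.
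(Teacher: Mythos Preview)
The paper does not supply its own proof of this theorem: it is stated with a citation to Hibi's original paper \cite{HDisplat} and used as a black box, so there is no argument in the text to compare against. Your outline is a reasonable reconstruction of the standard proof. The forward direction (pure $\Rightarrow$ Gorenstein) via the rank function generating $\omega_R$ is clean and correct. For the converse you give two sketches; the first (two maximal chains of different lengths force generators of $\omega_R$ in distinct degrees) is the more elementary route and can be made rigorous directly from the description of $\omega_R$ by strict order-preserving maps, while the appeal to Stanley's reflexivity criterion is valid but heavier machinery than needed. One caution: in the ad hoc converse, it is not enough to observe two generators in different degrees---you must check that neither is an $R$-multiple of a lower-degree interior monomial, which follows because along a shortest maximal chain the minimal strict map is already tight and cannot be decreased.
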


We draw each poset $P$ as a Hasse diagram.  For each element in $P$, we draw a vertex.  We connect two vertices $v_i$ and $v_j$ by an edge iff $v_i \lessdot v_j$, and in this case we draw $v_i$ below $v_j$.  In particular, if $v_i \leq v_j$, there is some path connecting $v_i$ and $v_j$.

\begin{example}\label{segre}

 In general, the Segre product of $k[x_1,\dots, x_n]$ and $k[y_1,\dots, y_m]$, denoted $S_{m,n}$, is equal to $\mathcal{R}_k[\mathcal{I}(P)]$, where $P$ is the poset given by a chain of length $m-1$ and a chain of length $n-1$.  We will explicitly see this example when $m = 2$ and $n = 1$.
    
 Consider the poset $P = \{v_1,v_2,v_3\}$ with one relation: $v_1 > v_2$.

\vspace{.25in}
\begin{center}
\begin{tikzpicture}
\draw (0,0) -- (0,2);
\draw[fill] (0,0) circle [radius=0.1];
\draw[fill] (0,2) circle [radius=0.1];
\draw[fill] (2,2) circle [radius=0.1];
\node [right] at (0,2) {$v_1$};
\node [right] at (0,0) {$v_2$};
\node [right] at (2,2) {$v_3$};
\end{tikzpicture}
\end{center}

We have that $\mathcal{I}(P) =\{ \varnothing, \{v_3\}, \{v_2\}, \{v_1,v_2\}, \{v_2,v_3\}, \{v_1,v_2,v_3\}\}$ and the associated Hibi ring is
\begin{align*}
    k[t,tx_{v_3},tx_{v_2},tx_{v_1}x_{v_2},tx_{v_2}x_{v_3},tx_{v_1}x_{v_2}x_{v_3}] &\cong \frac{k[a,b,c,d,e,f]}{(ae-bd,af-cd,bf-ce)}\\ 
    &\cong k[x,y,z] \sharp k[u,v] := S_{3,2}
\end{align*}
where we use $\sharp$ to mean Segre product: if $A$ and $B$ are both $\mathbb{N}$-graded rings with $A_0 = R = B_0$ then $A \sharp B := \oplus_n (A_n \otimes_R B_n)$.  The first isomorphism is Theorem \ref{hibiiso}, and can be seen explicitly by setting $a = t, b = tx_{v_2}, c = tx_{v_1}x_{v_2}, d= tx_{v_3}, e  = tx_{v_2}x_{v_3}, f = tx_{v_1}x_{v_2}x_{v_3}$ and the second is the standard presentation of a Segre product as a determinantal ring.

Note that by Theorem \ref{pure}, $S_{m,n}$ is Gorenstein if and only if $n = m$.  When $m \neq n$ we will recover the result of Enescu and Yao, which states that for the Segre product $S_{m,n}$ with $m > n \geq 2$, we have $\lim_{p \rightarrow \infty} cx_F(S_{m,n}) = m -1$ \cite{EYOntheFrobcompofdetrings}.  
\end{example}

We associate to a poset $P$ another poset $\hat{P}:= \{P,-\infty,\infty\}$ which is $P$ with a minimal element $-\infty$ and a maximal element $\infty$ added.

\begin{definition}
We say a tuple $p = (v_1, \dots ,v_k)$ of distinct elements of $P$ is a \textbf{path} if for each adjacent $v_i,v_{i+1} \in p$ we have $v_i \lessdot v_{i+1}$ or $v_{i+1} \lessdot v_i$.  We do not allow paths to repeat elements.  We say $p$ is an \textbf{upwards path} if for all $1 \leq i \leq k-1$, we have $v_i \lessdot v_{i+1}$.  Similarly, we say $p$ is a \textbf{downwards path} if for all $1 \leq i \leq k-1$ we have $v_{i+1} \lessdot v_i$.  We call $p$ a \textbf{mixed path} if $p$ is a path but is not an upwards or downwards path.  
If $p = (v_1,\dots,v_k)$ is an upwards or a downwards path (i.e. if $p$ is a chain), we say its \textbf{length} is $k-1$, which we denote $\len(p)$.
\end{definition}

The following definitions will help us simplify the statement of our main theorem.
\begin{definition}
We call the set of all vertices which lie on a minimal length upwards path from $-\infty$ to $\infty$, the \textbf{minimal subset} of $\hat{P}$, which we denote $\hat{P}_{min}$, i.e. 
\begin{equation*}
    \hat{P}_{min} =\{v \in \hat{P} | v \in p \text{ for some minimal length upwards path } p \text{ from } -\infty \text{ to } \infty\}
\end{equation*}
Similarly, we define its compliment:
\begin{equation*}
    \hat{P}_{nonmin} =\{v \in \hat{P} | v \notin p \text{ for any minimal length upwards path } p \text{ from } -\infty \text{ to } \infty \}
\end{equation*}
which we call the \textbf{nonminimal subset} of $\hat{P}$.
\end{definition}

We will show that if a Hibi ring $R = \mathcal{R}_{\mathbb{F}_p}[\mathcal{I}(P)]$ is $\omega^{(-1)}$-level (but not Gorenstein), then
$$
\lim_{p \rightarrow \infty} cx_F(R) = |\hat{P}_{nonmin}|.
$$

We will use the following description of $R$.

\begin{proposition}\label{definingequations}
Let $R = \mathcal{R}_k[\mathcal{I}(P)]$ and let $\xi$ be a map $\xi: \hat{P} - \{\infty\} \rightarrow \mathbb{Z}$.  Let $\xw = t^{\xi(-\infty)}\prod_{v\in P} x_v^{\xi(v)}$.  Then $\xw \in R$ if and only if for all $v_i,v_j \in \hat{P} - \{\infty\}$ we have:
\begin{align*}
    v_i &\lessdot v_j \implies \xi(v_i) - \xi(v_j) \geq 0 \text{ and } \\
    v_i &\lessdot \infty \; \implies \xi(v_i) \geq 0
\end{align*}
\end{proposition}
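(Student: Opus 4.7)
The approach is to split the biconditional into its two implications and argue each directly from the description of $R$ as the monoid algebra generated by the $x_I = t\prod_{v \in I} x_v$ for $I \in \mathcal{I}(P)$.

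For the forward direction, assume $\xw \in R$. Since $R$ is generated by monomials and $\xw$ itself is a monomial in the ambient polynomial ring, I can write $\xw = \prod_{I \in \mathcal{I}(P)} x_I^{a_I}$ for some nonnegative integers $a_I$. Comparing exponents of $t$ and of each $x_v$ on both sides yields $\xi(-\infty) = \sum_I a_I$ and $\xi(v) = \sum_{I \ni v} a_I$ for every $v \in P$. The key point is that poset ideals are downward-closed, so whenever $v_i < v_j$ in $P$ we have $\{I : v_j \in I\} \subseteq \{I : v_i \in I\}$; comparing the two sums gives $\xi(v_i) \geq \xi(v_j)$. Specializing to a cover $v_i \lessdot v_j$ in $\hat{P} - \{\infty\}$ gives the first required inequality, and for a cover $v_i \lessdot \infty$ the inequality $\xi(v_i) = \sum_{I \ni v_i} a_I \geq 0$ is immediate.

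For the reverse direction, assume the inequalities hold. Since every element of $\hat{P}$ lies on a saturated chain from $-\infty$ to $\infty$, chaining the given cover inequalities shows that $\xi$ is weakly decreasing with respect to the poset order on $\hat{P} - \{\infty\}$ and that $\xi(v) \geq 0$ for every $v \in \hat{P} - \{\infty\}$. Set $N := \xi(-\infty)$, which is the maximum value of $\xi$, and for each integer $1 \leq n \leq N$ define the level set $I_n := \{v \in P : \xi(v) \geq n\}$. Because $\xi$ is monotone decreasing, each $I_n$ is downward-closed, hence a poset ideal, so $x_{I_n}$ is a generator of $R$. A direct count yields $|\{n : v \in I_n\}| = \xi(v)$ for every $v \in P$, and multiplying gives
\begin{equation*}
\prod_{n=1}^{N} x_{I_n} = t^N \prod_{v \in P} x_v^{\xi(v)} = \xw,
\end{equation*}
so $\xw \in R$.

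Neither direction should present a serious obstacle; the main subtlety is simply keeping the boundary cases straight, namely $N = 0$ (where $\xi \equiv 0$ and $\xw = 1$ appears as the empty product) and the cover relations $-\infty \lessdot v$ and $v \lessdot \infty$, which correspond to $v$ being minimal or maximal in $P$ and which must be combined with the interior covers to propagate monotonicity and nonnegativity across all of $\hat{P}$. Beyond this, the argument is a routine combinatorial verification that uses only the monomial structure of the Hibi ring.
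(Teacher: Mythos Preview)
Your proof is correct and follows essentially the same approach as the paper. The forward direction is identical; for the reverse direction, the paper peels off the top level set $I=\{v:\xi(v)=\xi(-\infty)\}$ and inducts on $\sum_v \xi(v)$, while you write down all the level sets $I_n=\{v:\xi(v)\ge n\}$ at once---these yield the same factorization $\xw=\prod_n x_{I_n}$, and your direct construction avoids the induction.
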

\begin{proof}
$(\leftarrow)$ Assume $\w$ is as above, so that $\xi(v_i) \geq \xi(v_j)$ for all $v_i \lessdot v_j$ with $v_i,v_j \in \hat{P} - \{\infty\}$ and $\xi(v_i) \geq 0$ for all $v_i \lessdot \infty$ (i.e. for all $v_i$ maximal in $P$).  We want to show $\xw \in R$.  First note that these two conditions force $\xi(v_i) \geq 0$ for all $v_i \in \hat{P} - \{\infty\}$.  We induct on $\sum \xi(v_i)$.  If $\sum \xi(v_i) = 0$, then $\xi(v_i) = 0$ for all $v_i$, so $\xw = 1 \in R$.  Now suppose $\sum \xi(v_i) > 0$ for some $\xw$ and suppose that $\xwp \in R$ as long as $\sum \xi'(v_i) < \sum \xi(v_i)$.  Let $I = \{ v_i \in P | \xi(v_i) = \xi(-\infty)\}$.  We will show that $I$ is a poset ideal of $P$ so that $x_I \in R$ by definition, and $\xw = x_I\xwp$ for some $\xwp$ with $\sum \xi'(v_i) < \sum \xi(v_i)$ so that $\xwp \in R$ and therefore $\xw \in R$.  Suppose $v_i \in I$ and suppose there is some $v_l \leq v_i$.  We know $-\infty$ is minimal so $-\infty \leq v_l \leq v_i$ and we can find an upwards path $p = (-\infty, \dots, v_l , \dots, v_i)$ so that:
\begin{equation*}
    -\infty \lessdot \dots \lessdot v_l \lessdot \dots  \lessdot v_i
\end{equation*}
Then we have a string of inequalities corresponding to $p$:
\begin{equation*}
    \xi(-\infty) \geq \dots \geq \xi(v_l) \geq \dots  \geq \xi(v_i)
\end{equation*}
In particular, since $\xi(-\infty) = \xi(v_i)$, these all must be equalities and so $v_l \in I$ as well.  Thus, $I$ defines a poset ideal so that $x_I \in R$, and by construction $x_I | \xw$ so that $\xw = x_I\xwp$  for some $\xwp$ with $\sum \xi'(v_i) < \sum \xi(v_i)$, and thus $\xw \in R$.
\\
$(\rightarrow)$ Now suppose $\xw \in R$.  Clearly $\xi(v_i) \geq 0$ for all $i$ and so in particular $\xi(v_i) \geq 0$ for all $v_i \lessdot \infty$ in $\hat{P}$.  Note that $\xw = x_{I_1} \dots x_{I_n}$ where each monomial $x_{I_l}$ corresponds to a poset ideal $I_l$.  Let $\xi(v_j) > 0$ in $\w$ and say $v_i \lessdot v_j$.  In particular, $x_{v_j}$ must show up in $\xi(v_j)$ of the monomials, since each of these has at most one $x_{v_j}$ by construction. Then $v_j$ is in the $\xi(v_j)$ corresponding poset ideals and since $v_i \leq v_j$, we must have that $v_i$ shows up in these $\xi(v_j)$ poset ideals as well and thus $x_{v_i}$ shows up in each of the $\xi(v_j)$ corresponding monomials and so $\xi(v_i) \geq \xi(v_j)$.    
\end{proof}

We can extend $\xi$ to a map $\xi: \hat{P} \rightarrow \mathbb{Z}$ by letting $\xi(\infty) = 0$.  Given such a map, we will continue to use the following notation: $\xw = t^{\xi(-\infty)}\prod_{v\in P} x_v^{\xi(v)}$. Then we can simplify Proposition \ref{definingequations} to the following.

\begin{corollary}\label{definingeqsimplified}
Let $R = \mathcal{R}_k[\mathcal{I}(P)]$, and let $\xi$ be a map $\xi: \hat{P} \rightarrow \mathbb{Z}$ with $\xi(\infty) = 0$.  Then $\xw \in R$ if and only if for all $v_i,v_j \in \hat{P}$ we have:
\begin{equation*}
    v_i \lessdot v_j \implies \xi(v_i) - \xi(v_j) \geq 0
\end{equation*}
\end{corollary}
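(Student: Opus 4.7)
The plan is to derive this corollary as an immediate consequence of Proposition \ref{definingequations}, since extending $\xi$ by setting $\xi(\infty) = 0$ is designed precisely to merge the two conditions of the proposition into a single uniform condition indexed by cover relations in $\hat{P}$.

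More concretely, first I would simply invoke Proposition \ref{definingequations}, which gives the equivalence with the two separate conditions: $\xi(v_i) - \xi(v_j) \geq 0$ whenever $v_i \lessdot v_j$ with both in $\hat{P} - \{\infty\}$, and $\xi(v_i) \geq 0$ whenever $v_i \lessdot \infty$. Next I would observe that the cover relations of $\hat{P}$ split into exactly two classes: those where neither endpoint is $\infty$, and those of the form $v_i \lessdot \infty$ where $v_i$ is maximal in $P$. For the first class, the corollary's condition $\xi(v_i) - \xi(v_j) \geq 0$ is literally the first condition of the proposition. For the second class, since by assumption $\xi(\infty) = 0$, the inequality $\xi(v_i) - \xi(\infty) \geq 0$ reduces to $\xi(v_i) \geq 0$, matching the second condition of the proposition.

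Hence the condition in Corollary \ref{definingeqsimplified} is equivalent to the combined conditions of Proposition \ref{definingequations}, and so it is equivalent to $\xw \in R$. There is no real obstacle here; the content is purely a bookkeeping observation about how the artificial value $\xi(\infty) = 0$ absorbs the boundary condition at maximal elements of $P$ into the uniform cover-relation inequality on $\hat{P}$.
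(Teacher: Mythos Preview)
Your proposal is correct and matches the paper's approach: the paper states this corollary without proof, presenting it as the immediate simplification of Proposition \ref{definingequations} obtained by extending $\xi$ via $\xi(\infty)=0$. Your argument spells out exactly this bookkeeping, so there is nothing to add.
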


Finally, we introduce two definitions which will help us count generators of $\omega^{(-n)}$:

\begin{definition}
For any $v_i \leq v_j \in \hat{P}$, we define the \textbf{distance} between $v_i$ and $v_j$, denoted $\dist(v_i,v_j)$ to be the length of the shortest upwards path between $v_i$ and $v_j$.
\end{definition}

Note that we have the following triangle-like inequality, for any $v_i \leq v_j \leq v_k$
\begin{equation}\label{triangle}
    \dist(v_i,v_j) + \dist(v_j,v_k) \geq \dist(v_i,v_k)
\end{equation}

\begin{definition}
For any $v_i \leq v_j \in \hat{P}$ we define the \textbf{disparity between \textit{v}$_i$ and \textit{v}$_j$} to be the difference in length between a minimal length upwards path from $v_i$ to $\infty$, and a minimal length upwards path between $v_i$ and $\infty$ through $v_j$, namely:
\begin{equation*}
    \disp(v_i,v_j) := \dist(v_i,v_j) + \dist(v_j,\infty) - \dist(v_i,\infty)
\end{equation*}
\end{definition}

Note that by (\ref{triangle}), we always have $\disp(v_i,v_j) \geq 0$ for any $v_i \leq v_j \in \hat{P}$.

\section{\texorpdfstring{Structure of $\oplus_{n \geq 0} \omega^{(-n)}$}{Structure of the anticanonical symbolic Reese algebra}}
In this section, we study $\oplus_n \omega^{(-n)}$ in order to bound the limit Frobenius complexity using Proposition \ref{anticanonicalbound}.  First, we explicitly find generators of $\omega^{(-n)}$ for each $n$.

\begin{corollary}\label{generatorsforcan}
Let $R = \mathcal{R}_k[\mathcal{I}(P)]$, and let $\xi: \hat{P} \rightarrow \mathbb{Z}$, with $\xi(\infty) = 0$.  As before, let $\xw =  t^{\xi(-\infty)}\prod_{v\in P} x_v^{\xi(v)}$.  Then $\xw \in \omega^{(-n)}$ if and only if $\xi$ satisfies the following:
\begin{align}
\label{secondeq}
\xi(v_i) &\geq \xi(v_j) - n \text{ for } v_i \lessdot v_j \in \hat{P}
\end{align}
\end{corollary}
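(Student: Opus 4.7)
The plan is to reduce the membership question to a system of linear inequalities on exponent vectors, paralleling the argument for Proposition \ref{definingequations}. The essential intermediate step is an analogous description of the positive symbolic powers of $\omega$: namely, for every positive integer $m$,
\[
\omega^{(m)} = \bigl\{ \underline{x}^{\eta} : \eta(\infty) = 0, \ \eta(v_i) - \eta(v_j) \geq m \text{ for all } v_i \lessdot v_j \in \hat{P} \bigr\}.
\]
Since $R$ is a normal semigroup ring whose facet inequalities are precisely $\xi(v_i) \geq \xi(v_j)$ over coverings in $\hat{P}$ (by Corollary \ref{definingeqsimplified}), this follows from the standard description of divisorial ideals on normal toric varieties via shifted facet inequalities: $\omega$ is spanned by the strict interior lattice points of the defining cone, and its symbolic powers iterate the shift.

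For the implication $(\Leftarrow)$, assume $\xi(v_i) - \xi(v_j) \geq -n$ at every covering. Given any $\underline{x}^{\eta} \in \omega^{(n)}$, adding the two systems of inequalities coordinatewise yields $(\xi + \eta)(v_i) - (\xi + \eta)(v_j) \geq 0$ at every covering, so $\xw \cdot \underline{x}^{\eta} = \underline{x}^{\xi + \eta} \in R$ by Corollary \ref{definingeqsimplified}. Thus multiplication by $\xw$ defines an $R$-linear map $\omega^{(n)} \to R$, and so $\xw \in \Hom_R(\omega^{(n)}, R) = \omega^{(-n)}$.

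For the converse $(\Rightarrow)$, I prove the contrapositive: if some covering $v_i \lessdot v_j$ has $\xi(v_i) - \xi(v_j) < -n$, I exhibit a monomial $\underline{x}^{\eta} \in \omega^{(n)}$ with $\xw \cdot \underline{x}^{\eta} \notin R$. The strategy is to find $\underline{x}^{\eta} \in \omega^{(n)}$ that is tight at the offending covering, i.e., $\eta(v_i) - \eta(v_j) = n$. A natural starting point is $\eta(v) = n \cdot \ell(v)$, where $\ell(v)$ is the length of the longest upward path from $v$ to $\infty$ in $\hat{P}$; this always lies in $\omega^{(n)}$ because $\ell(v_i) \geq \ell(v_j) + 1$ at every covering. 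If this choice is not tight at $v_i \lessdot v_j$, I adjust by adding an appropriate step function supported above $v_j$ to force equality there without disturbing the other covering inequalities. With such an $\eta$ in hand, the $(v_i, v_j)$-inequality for $\xi + \eta$ reads $(\xi + \eta)(v_i) < (\xi + \eta)(v_j)$, so $\xw \cdot \underline{x}^{\eta} \notin R$.

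The main obstacle will be the explicit construction of tight $\underline{x}^{\eta}$ in the $(\Rightarrow)$ direction, which is equivalent to the statement that every covering of $\hat{P}$ contributes an irredundant facet of the defining cone of $R$ — a standard feature of Hibi cones that I would verify combinatorially using the ideal-valued description from Proposition \ref{definingequations}.
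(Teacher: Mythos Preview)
Your approach is correct in outline, and it is genuinely more explicit than what the paper does: the paper's entire proof of Corollary~\ref{generatorsforcan} is a one-line appeal to the standard description of divisorial ideals on normal toric varieties (Fulton, \S4.3; Bruns--Gubeladze, 4.F), which delivers the shifted-facet description of $\omega^{(-n)}$ directly. You instead take the positive symbolic powers $\omega^{(m)}$ as known (from the same toric sources) and then run the colon-ideal computation $\omega^{(-n)} = (R :_{K} \omega^{(n)})$ by hand. The $(\Leftarrow)$ direction is clean and complete as you wrote it.

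Two comments on the $(\Rightarrow)$ direction. First, the specific adjustment you sketch---adding a step function supported on $\{v \geq v_j\}$---cannot work as stated, since $\infty$ lies in that up-set and you must keep $\eta(\infty)=0$; subtracting the complementary indicator $\chi_{\{v \not\geq v_j\}}$ instead will in general break other covering inequalities crossing into the up-set of $v_j$. Second, and more importantly, you correctly recognize that what is really needed is the irredundancy of the facet at $v_i \lessdot v_j$, and this is indeed the standard fact for Hibi (order) cones: one clean way to produce a tight $\eta$ is to contract the edge $v_i \lessdot v_j$ in $\hat P$, take $h(v)$ to be the longest-chain-to-$\infty$ function on the contracted poset pulled back to $\hat P$ (so $h(v_i)=h(v_j)$ and $h(v_a) > h(v_b)$ at every other covering), and then set $\eta = n\cdot(h + \chi_{\{v \not\geq v_j\}})$.

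In short: the paper buys brevity by citing the toric machinery wholesale; your route unpacks the duality step and makes the role of facet irredundancy visible, at the cost of having to supply that combinatorial verification yourself.
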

\begin{proof}
This follows from Corollary \ref{definingeqsimplified} by considering $R$ as a toric ring.  See for example \cite{FIntrotoToricVarieties}, Section 4.3 and \cite{bruns2009polytopes}, 4.F.
\end{proof}

In particular, we have that
\begin{equation} \label{lowerboundwi}
    \xi(v) \geq -\dist(v,\infty)n \text{ for all } v \in \hat{P}.
\end{equation}

When $R$ is a Hibi ring with canonical $\omega$, we will show that $\oplus_n \omega^{(-n)}$ is generated by $\omega^{(-1)}$, and so Proposition \ref{anticanonicalbound} holds.  This also tells us that $\oplus_{n \geq 0} \omega^{(-n)} = \oplus_{n \geq 0} \omega^{-n}$, and $sp_R(\omega^{(-1)})$ is just the usual analytic spread of the anticanonical.

We use the following notation: For any $a,m \in \bbZ$, let $[a]_{m}$ be the representative of $a \pmod{m}$ between $0$ and $m -1$.

\begin{proposition}\label{wngenbyw}
Suppose we have a minimal generator $\xw \in \omega^{(-n)}$ over $R$.  Then we can find $\underline{x}^{\xi_1} \cdots \underline{x}^{\xi_n} = \xw$ such that for each $l$, $\underline{x}^{\xi_l}$ is a minimal generator of $\omega^{(-1)}$ over $R$.  In particular, $\oplus_n \omega^{(-n)}$ is finitely generated, and it is generated by $\omega^{(-1)}$.
\end{proposition}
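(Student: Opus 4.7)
The plan is to construct the decomposition explicitly by distributing the exponent vector $\xi$ as evenly as possible among $n$ pieces, using the residue notation $[a]_n$ just introduced. Specifically, for $l = 1, \dots, n$ I would set
\begin{equation*}
\xi_l(v) := \lfloor \xi(v)/n \rfloor + \mathbf{1}\!\bigl[l \leq [\xi(v)]_n\bigr],
\end{equation*}
so that exactly $[\xi(v)]_n$ of the pieces at each vertex carry an extra $+1$. The identity $\xi(v) = n\lfloor \xi(v)/n \rfloor + [\xi(v)]_n$ gives $\sum_l \xi_l = \xi$ immediately, and $\xi(\infty) = 0$ forces $\xi_l(\infty) = 0$. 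By Corollary \ref{generatorsforcan}, it then suffices to show that each $\xi_l$ satisfies $\xi_l(v_i) \geq \xi_l(v_j) - 1$ whenever $v_i \lessdot v_j$ in $\hat{P}$, so that $\underline{x}^{\xi_l} \in \omega^{(-1)}$.

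The core of the argument is a case split on $d := \lfloor \xi(v_j)/n \rfloor - \lfloor \xi(v_i)/n \rfloor$. The hypothesis $\xi(v_i) \geq \xi(v_j) - n$ together with $0 \leq [\xi(v)]_n \leq n-1$ forces $d \leq 1$. The cases $d \leq 0$ are transparent because the indicator discrepancy between $\xi_l(v_j)$ and $\xi_l(v_i)$ is bounded by $1$ in absolute value. The delicate case is $d = 1$: here rewriting the hypothesis $\xi(v_j) - \xi(v_i) \leq n$ in terms of residues yields $[\xi(v_j)]_n \leq [\xi(v_i)]_n$, so whenever $l \leq [\xi(v_j)]_n$ we also have $l \leq [\xi(v_i)]_n$; the indicator for $v_j$ is therefore dominated by the one for $v_i$, which cancels the $+1$ contributed by $d$ and keeps $\xi_l(v_j) - \xi_l(v_i) \leq 1$. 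This residue alignment in the $d = 1$ case is the main technical point I expect to require careful bookkeeping.

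For minimality of each factor, I would argue by contradiction. If some $\underline{x}^{\xi_l}$ failed to be a minimal generator of $\omega^{(-1)}$, write $\underline{x}^{\xi_l} = r \cdot \underline{x}^{\eta}$ with $r \in \mathfrak{m}$ a monomial and $\underline{x}^{\eta} \in \omega^{(-1)}$. Then
\begin{equation*}
\underline{x}^{\xi} = r \cdot \Bigl(\underline{x}^{\eta} \prod_{l' \neq l} \underline{x}^{\xi_{l'}}\Bigr),
\end{equation*}
and the sum of $n$ exponent vectors each satisfying the $-1$ inequality automatically satisfies the $-n$ inequality, so the parenthesized product lies in $\omega^{(-n)}$ by Corollary \ref{generatorsforcan}. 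This contradicts the assumed minimality of $\underline{x}^{\xi}$ in $\omega^{(-n)}$.

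Finally, finite generation of $\bigoplus_n \omega^{(-n)}$ follows formally: since $\omega^{(-1)}$ is a finitely generated $R$-module, it admits finitely many minimal monomial generators, and the decomposition just established shows every minimal generator of every $\omega^{(-n)}$ is a product of minimal generators of $\omega^{(-1)}$, so $\bigoplus_n \omega^{(-n)}$ is generated as an $R$-algebra by this finite set.
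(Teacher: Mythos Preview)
Your construction is identical to the paper's (the paper phrases it as assigning $\lceil \xi(v)/n \rceil$ to the first $[\xi(v)]_n$ indices and $\lfloor \xi(v)/n \rfloor$ to the rest, which is exactly your indicator formula), and your verification of the cover inequality is the same argument reorganized: the paper splits into the four floor/ceil combinations at $(v_i,v_j)$, while you split on $d = \lfloor \xi(v_j)/n\rfloor - \lfloor \xi(v_i)/n\rfloor$; the delicate case $d=1$ in your setup is precisely the paper's Case~4, and both hinge on the observation that $d=1$ forces $[\xi(v_j)]_n \le [\xi(v_i)]_n$. One point worth noting: you supply an explicit argument that each factor $\underline{x}^{\xi_l}$ is a \emph{minimal} generator of $\omega^{(-1)}$, which the paper's proof does not actually address---it only verifies membership in $\omega^{(-1)}$---so your write-up is in that respect more complete.
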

\begin{proof}
A minimal generator $\xw$ of $\omega^{(-n)}$, gives us $\xi: \hat{P} \rightarrow \bbZ$ satisfying $\xi(\infty) = 0$ and (\ref{secondeq}).

It suffices to find partitions $\xi(v_i) = \xi_1(v_i) + ... + \xi_n(v_i)$ such that for all $l$
\begin{align*}
&\xi_l(\infty) = 0 \text{ and } \\
&\xi_l(v_i) \geq \xi_l(v_j) - 1 \text{ for } v_i \lessdot v_j
\end{align*}

For each $i$, if $n|\xi(v_i)$ then let $\xi_l(v_i) = \frac{\xi(v_i)}{n}$ for all $0 \leq l \leq n$.  Otherwise, let 
\begin{align*}
\xi_l(v_i) &= \left\lceil \frac{\xi(v_i)}{n} \right\rceil \text{ for } 1 \leq l \leq [\xi(v_i)]_{n} \text{ and } \\
\xi_l(v_i) &= \left\lfloor \frac{\xi(v_i)}{n} \right\rfloor \text{ for }
[\xi(v_i)]_{n} < l \leq n.
\end{align*}

In particular, we have
\begin{equation*}
   \sum_{l=1}^n \xi_l(v_i) = \xi(v_i).
\end{equation*}

Consider $v_i \lessdot v_j$ and $1 \leq l \leq n$.  There are four cases.

Case 1,2,3: 
\begin{align*}
   \xi_l(v_i) &= \left\lfloor \frac{\xi(v_i)}{n} \right\rfloor, \; \xi_l(v_j)= \left\lfloor \frac{\xi(v_j)}{n} \right\rfloor \\
   \xi_l(v_i) &= \left\lceil \frac{\xi(v_i)}{n} \right\rceil, \; \xi_l(v_j) = \left\lfloor \frac{\xi(v_j)}{n} \right\rfloor \\
   \xi_l(v_i) &= \left\lceil \frac{\xi(v_i)}{n} \right\rceil, \; \xi_l(v_j) = \left\lceil \frac{\xi(v_j)}{n} \right\rceil 
\end{align*}
In all of these cases, we have
\begin{equation*}
    \xi_l(v_i) \geq \xi_l(v_j) - 1
\end{equation*}
since
\begin{equation*}
    \xi(v_i) \geq \xi(v_j) - n.
\end{equation*}
Case 4:
\begin{equation*}
    \xi_l(v_i) = \left\lfloor \frac{\xi(v_i)}{n} \right\rfloor, \; \xi_l(v_j)= \left\lceil \frac{\xi(v_j)}{n} \right\rceil
\end{equation*}
For this to happen, we must have that $[\xi(v_i)]_n < [\xi(v_j)]_n$.
We know:
\begin{align*}
    \xi(v_i) &\geq \xi(v_j) - n \\
    \implies \left\lfloor \frac{\xi(v_i)}{n} \right\rfloor + [\xi(v_i)]_n &\geq \left\lfloor \frac{\xi(v_j) - n}{n}  \right\rfloor + [\xi(v_j) -n]_n \\
    \implies\left\lfloor \frac{\xi(v_i)}{n} \right\rfloor + [\xi(v_i)]_n &\geq \left\lfloor \frac{\xi(v_j)}{n}  \right\rfloor - 1 + [\xi(v_j)]_n.
\end{align*}
Thus, since $[\xi(v_i)]_n < [\xi(v_j)]_n$, we must have:
\begin{equation*}
     \left\lfloor \frac{\xi(v_i)}{n} \right\rfloor > \left\lfloor \frac{\xi(v_j)}{n}  \right\rfloor - 1
\end{equation*}
so that
\begin{equation*}
     \left\lfloor \frac{\xi(v_i)}{n} \right\rfloor \geq \left\lceil \frac{\xi(v_j)}{n}  \right\rceil - 1
\end{equation*}
as desired.
\end{proof}

Because $\oplus_n \omega^{(-n)}$ is generated by $\omega^{(-1)}$, it will suffice to understand $\omega^{(-1)}$.  Recall that the grading of a Hibi ring is given by $t$, and we can naturally extend this grading to $\omega^{(-1)}$.  Specifically, if $\xw \in \omega^{(-1)}$, we say the degree of $\xw$ is $\xi(-\infty)$.  Our study will naturally break up into two cases: the case when all minimal generators of $\omega^{(-1)}$ have the same degree, and the case when $\omega^{(-1)}$ has minimal generators of varying degrees.

In \cite{Scmcomplexes}, Stanley introduced a notion of levelness for Cohen Macaulay standard graded $k$ algebras.  Specifically, his definition is equivalent to the following: $R$ is \textbf{level} if $\omega_R$ is generated in one degree.  Note that if $R$ is Gorenstein, $\omega$ is generated by one element and so in particular it is generated in one degree.  Thus, we have
\begin{equation*}
    \text{ Gorenstein rings } \subset \text{ level rings } \subset \text{ Cohen Macaulay rings.}
\end{equation*}

A classification of level Hibi rings is given in \cite{MOnGenCanModuleHibiRing}.

\begin{definition}
Following the definition above, we call a Hibi ring $\omega^{(-1)}$-\textbf{level} (anticanonical level) if $\omega^{(-1)}$ is generated in one degree over $R$.
\end{definition}

If $R$ is Gorenstein, we also have that $\omega^{(-1)}$ is generated by one element, so $R$ is also $\omega^{(-1)}$ level.  Again, we have
\begin{equation*}
    \text{ Gorenstein rings } \subset \omega^{(-1)}\text{-level rings } \subset \text{ Cohen Macaulay rings.}
\end{equation*}

However, we can find examples which are level but not $\omega^{(-1)}$-level, and likewise examples which are $\omega^{(-1)}$-level but not level.

\begin{example}\label{levelex}
\begin{enumerate}
\item Let $R = \mathcal{R}_k[\mathcal{I}(P)]$ be the Hibi ring associated to the poset $P$ below.
\begin{center}
\begin{tikzpicture}
\draw (-2,0) -- (0,-2) -- (0,0) -- (2,-2);
\draw[fill] (0,0) circle [radius=0.1];
\draw[fill] (0,-1) circle [radius=0.1];
\draw[fill] (0,-2) circle [radius=0.1];
\draw[fill] (-2,0) circle [radius=0.1];
\draw[fill] (2,-2) circle [radius=0.1];

\node [right] at (0,0) {$v_4$};
\node [right] at (0,-1) {$v_3$};
\node [right] at (0,-2) {$v_2$};
\node [right] at (-2,0) {$v_1$};
\node [right] at (2,-2) {$v_5$};
\end{tikzpicture}
\end{center}
Then one can check that $\omega^{(-1)}$ is generated by elements of degrees $-3$ and $-2$, and $R$ is not $\omega^{(-1)}$-level.  However, $\omega$ is generated by elements of degree $4$ and so $R$ is level.
\item Let $R = \mathcal{R}_k[\mathcal{I}(P)]$ be the Hibi ring associated to the poset $P$ below.
\begin{center}
\begin{tikzpicture}
\draw (-2,0) -- (0,-2) -- (0,0) -- (2,-2);
\draw[fill] (0,0) circle [radius=0.1];
\draw[fill] (-1,-1) circle [radius=0.1];
\draw[fill] (0,-2) circle [radius=0.1];
\draw[fill] (-2,0) circle [radius=0.1];
\draw[fill] (1,-1) circle [radius=0.1];
\draw[fill] (2,-2) circle [radius=0.1];

\node [right] at (0,0) {$v_4$};
\node [right] at (-1,-1) {$v_2$};
\node [right] at (1,-1) {$v_5$};
\node [right] at (0,-2) {$v_3$};
\node [right] at (-2,0) {$v_1$};
\node [right] at (2,-2) {$v_6$};
\end{tikzpicture}
\end{center}
Then $\omega^{(-1)}$ is generated by elements of degree $-3$.  However, $\omega$ generated by elements of degrees $4$ and $5$ and $R$ is not level.
\end{enumerate}
\end{example}

\begin{remark}\label{mingenform}
Note that if $R$ is $\omega^{(-1)}$- level, then by Proposition \ref{wngenbyw}, we also know $\omega^{(-n)}$ is generated in one degree (i.e., $R$ is also $\omega^{(-n)}$-level).  We can always find a minimal generator of $\omega^{(-n)}$ with $\xi(-\infty) = -\dist(-\infty,\infty)n$, by letting $\xi(v) = -\dist(v,\infty)n$ for all $v \in \hat{P}$. In particular, if $R$ is $\omega^{(-1)}$-level, then for every minimal generator $\xw$ of $\omega^{(-n)}$, we must have that $\xi(-\infty) = -\dist(-\infty,\infty)n$.
\end{remark}

\section{Frobenius complexity for anticanonical level Hibi rings}
In this section, we will compute the limit Frobenius complexity for $\omega^{(-1)}$-level Hibi rings.  In Theorem \ref{levelgen}, we characterize minimal generators $\xw$ of $\omega^{(-n)}$ by giving inequalities on the maps $\xi: \hat{P} \rightarrow \mathbb{Z}$ which give minimal generators.  Theorem \ref{orderofeq}  will give an upper bound on $cx_F(R)$ by computing $sp_R(\omega^{(-1)})$.  To get a lower bound, Proposition \ref{type1} shows that if $\xw = \xwpp*\xwp$ then $\xi',\xi''$ must satisfy the same type of inequalities as $\xi$ and we give a lower bound for these generators in  Theorem \ref{cxflevel}. The upshot is the following theorem.
\begin{theorem}[Theorem \ref{cxflevel}]
If $R = \mathcal{R}_{\mathbb{F}_p}[\mathcal{I}(P)]$ is not Gorenstein but is $\omega^{(-1)}$-level, then
\begin{equation*}
    \lim_{p \rightarrow \infty} cx_F(R) = sp_R(\omega^{(-1)}) -1 = |\hat{P}_{nonmin}|.
\end{equation*}
\end{theorem}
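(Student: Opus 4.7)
The plan is to establish matching upper and lower bounds of $|\hat{P}_{nonmin}|$ on $\lim_{p \to \infty} cx_F(R)$. The upper bound is essentially immediate from earlier work in the paper: Proposition \ref{anticanonicalbound} gives $\lim_{p \to \infty} cx_F(R) \leq sp_R(\omega^{(-1)}) - 1$, and Theorem \ref{orderofeq} computes $sp_R(\omega^{(-1)}) - 1 = |\hat{P}_{nonmin}|$ for Hibi rings, so these combine to give the upper bound.

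For the lower bound I need to show $c_e(\mathcal{C}(R)) \geq C \cdot p^{e |\hat{P}_{nonmin}|}$ for some $C > 0$ and all sufficiently large $p$. By Theorem \ref{structureofCR}, generators of $\mathcal{C}(R)_e$ correspond to minimal generators $\xw$ of $\omega^{(-(p^e - 1))}$, and decomposability in the $*$-product corresponds to a factorization $\xi = p^{e'} \xi'' + \xi'$ in which $\xi'$ and $\xi''$ are themselves minimal generators of smaller degree. By the $\omega^{(-1)}$-level hypothesis, together with Remark \ref{mingenform} and Theorem \ref{levelgen}, each minimal generator of $\omega^{(-(p^e - 1))}$ has $\xi(v)$ pinned at $-\dist(v, \infty)(p^e - 1)$ for $v \in \hat{P}_{min}$, while the values $\xi(v)$ for $v \in \hat{P}_{nonmin}$ are free parameters subject only to the covering inequalities of Corollary \ref{generatorsforcan}.

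The construction would expand each $\xi(v)$ for $v \in \hat{P}_{nonmin}$ in base $p$ and choose its digits $d_i(v) \in \{0, 1, \dots, p-1\}$, for $0 \leq i < e$, from a restricted middle range of size $\Theta(p)$ (for example $[p/4, 3p/4]$). One then checks that every digit pattern from this range yields a valid minimal generator (that is, satisfies the covering inequalities), and that no such generator admits a nontrivial decomposition $\xi = p^{e'} \xi'' + \xi'$: restricting to middle digits should obstruct the ``carry-free'' splittings, and Proposition \ref{type1} should rule out the rest by forcing $\xi'$ and $\xi''$ in any decomposition to satisfy covering inequalities of their own, which then fail after the proposed digit split. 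Counting the parameters gives on the order of $p^{e |\hat{P}_{nonmin}|}$ non-decomposable generators, hence $\lim_{p \to \infty} cx_F(R) \geq |\hat{P}_{nonmin}|$.

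The main obstacle is the non-decomposability verification: the covering inequalities couple the free digits $d_i(v)$ on different $v \in \hat{P}_{nonmin}$ to each other and to the pinned values on $\hat{P}_{min}$, so the digits cannot be varied completely independently. I expect the argument to proceed by analyzing, for each candidate splitting index $e'$, what inequalities the digits of $\xi'$ and $\xi''$ are forced to satisfy along chains in $\hat{P}$ passing through each $v \in \hat{P}_{nonmin}$, and showing that the middle-range restriction on $d_i(v)$ rules out all such decompositions because a midrange digit cannot be split into two digits of lower-order expansions without triggering carries that propagate inconsistently into $\hat{P}_{min}$.
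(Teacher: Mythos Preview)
Your upper bound is exactly right and matches the paper. The lower bound sketch, however, has a genuine gap: the ``middle-range digits'' heuristic is not the mechanism that actually obstructs decomposition, and as stated it is not enough to conclude.

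The paper's proof does not choose digits freely for every $v \in \hat{P}_{nonmin}$. Instead it first invokes Proposition~\ref{levelcase}, a substantial result (itself using the $\omega^{(-1)}$-level hypothesis in an essential way) guaranteeing a maximal upwards path $(v_1,\dots,v_k)$ in $\hat{P}_{nonmin}$, covered by some $v_0 \in \hat{P}_{min}$, along which the disparity sum satisfies $\sum_{i=1}^k \disp(v_{i-1},v_i) < k$. The non-decomposability obstruction (Proposition~\ref{onechainte}) works only along such a chain: one imposes the digit ordering $[N(v_i)]_{p^{e'}} > [N(v_{i-1})]_{p^{e'}}$ for every $0 < e' < e$, and then any putative splitting $N(v_i) = N''(v_i) p^{e'} + N'(v_i)$ forces $\lfloor N'(v_k)/p^{e'}\rfloor \leq \bigl(\sum_i \disp(v_{i-1},v_i)\bigr) - k < 0$, a contradiction. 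The construction then handles the remaining vertices of $\hat{P}_{nonmin}$ with the looser inequalities from Theorem~\ref{orderofeq} to keep the total count at $\Theta(p^{e|\hat{P}_{nonmin}|})$.

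Your proposal misses both of these ingredients. Restricting all digits to $[p/4,3p/4]$ does not by itself prevent a splitting: Proposition~\ref{type1} only tells you that $\xi',\xi''$ must be minimal generators, not that no such pair exists, and there is no reason a midrange digit of $N(v)$ cannot arise as the appropriate combination of digits of admissible $N'(v),N''(v)$ once carries are allowed. What actually blocks the split is the \emph{cumulative} inequality along a chain whose disparity sum is strictly less than its length, and you need Proposition~\ref{levelcase} to know such a chain exists. Without it, your last paragraph's hoped-for ``inconsistent carry propagation into $\hat{P}_{min}$'' has no concrete content.
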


We start by writing inequalities on maps $\xi: \hat{P} \rightarrow \mathbb{Z}$ such that $\xi(\infty) = 0$ which determine all minimal generators $\xw$ of $\omega^{(-n)}$.

\begin{theorem}\label{levelgen}
If $R = \mathcal{R}_k[\mathcal{I}(P)]$ is $\omega^{(-1)}$-level and $\xi$ is a map $\xi: \hat{P} \rightarrow \mathbb{Z}$ such that $\xi(\infty) = 0$, then the following are equivalent:
\begin{enumerate}
    \item $\xw$ is a minimal generator of  $\omega^{(-n)}$, \\
    \item $\xi(-\infty) = -\dist(-\infty,\infty)n$ and $\xw \in \omega^{(-n)}$, and \\
    \item $\xi(v_i) = -\dist(v_i,\infty)n$ for all $v_i \in \hat{P}_{min}$  and \\ $-\dist(v_i,\infty)n \leq \xi(v_i) \leq \min\{\xi(v_k) + n |v_k \lessdot v_i\}$ for $v_i \in \hat{P}_{nonmin}$. \label{allwibounds}
\end{enumerate}
\end{theorem}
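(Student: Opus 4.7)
The plan is to establish the chain of implications $(1) \Rightarrow (2) \Rightarrow (3) \Rightarrow (2) \Rightarrow (1)$. The implication $(1) \Rightarrow (2)$ is precisely the content of Remark \ref{mingenform}: under the $\omega^{(-1)}$-level hypothesis, every minimal generator $\underline{x}^{\xi}$ of $\omega^{(-n)}$ satisfies $\xi(-\infty) = -\dist(-\infty,\infty)\, n$. For $(2) \Rightarrow (1)$, I would use the $t$-grading on $R$ together with the lower bound $\xi'(-\infty) \geq -\dist(-\infty,\infty)\, n$ coming from (\ref{lowerboundwi}) applied to any $\underline{x}^{\xi'} \in \omega^{(-n)}$. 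If $\underline{x}^{\xi} = r \cdot \underline{x}^{\xi'}$ with $r \in R$ of positive $t$-degree, then $\xi(-\infty) > \xi'(-\infty) \geq -\dist(-\infty,\infty)\, n$, contradicting (2). Hence $r$ must be a scalar and $\underline{x}^{\xi}$ is a minimal generator.

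For $(2) \Rightarrow (3)$, the key step is to fix $v \in \hat{P}_{min}$ and choose a minimal-length upward path $-\infty = w_0 \lessdot w_1 \lessdot \cdots \lessdot w_m = \infty$ through $v$, where $m := \dist(-\infty,\infty)$. The inequalities from Corollary \ref{generatorsforcan} give $\xi(w_{j+1}) - \xi(w_j) \leq n$ for each $j$, which telescope to $\xi(\infty) - \xi(-\infty) \leq mn$. Since $\xi(\infty) = 0$ and $\xi(-\infty) = -mn$ by hypothesis, equality must hold at every step, forcing $\xi(w_j) = -(m-j)\, n$. Minimality of the chosen path also forces $\dist(w_j,\infty) = m-j$, so in particular $\xi(v) = -\dist(v,\infty)\, n$. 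For $v \in \hat{P}_{nonmin}$, the lower bound $\xi(v) \geq -\dist(v,\infty)\, n$ is simply (\ref{lowerboundwi}), and the upper bound $\xi(v) \leq \min\{\xi(v_k) + n \mid v_k \lessdot v\}$ is a direct restatement of Corollary \ref{generatorsforcan}.

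For $(3) \Rightarrow (2)$, one recovers all covering inequalities $\xi(v_i) \geq \xi(v_j) - n$ by splitting on which of $\hat{P}_{min}$, $\hat{P}_{nonmin}$ each of $v_i, v_j$ belongs to. The degree equality $\xi(-\infty) = -\dist(-\infty,\infty)\, n$ is immediate since $-\infty \in \hat{P}_{min}$. When $v_j \in \hat{P}_{nonmin}$, the upper bound in (3) applied with $v_k = v_i$ gives exactly what is needed. When $v_i, v_j \in \hat{P}_{min}$, both values are explicit and the inequality reduces to $\dist(v_i,\infty) \leq 1 + \dist(v_j,\infty)$, which is automatic from $v_i \lessdot v_j$. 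The remaining case $v_i \in \hat{P}_{nonmin}$, $v_j \in \hat{P}_{min}$ is the one to watch: here I would combine the lower bound $\xi(v_i) \geq -\dist(v_i,\infty)\, n$ with the explicit value $\xi(v_j) = -\dist(v_j,\infty)\, n$ and apply $\dist(v_i,\infty) \leq 1 + \dist(v_j,\infty)$, obtained by concatenating $v_i \lessdot v_j$ with a minimal path from $v_j$ to $\infty$.

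The main obstacle is a bookkeeping one in $(2) \Rightarrow (3)$: one must verify that a minimal-length global path $-\infty \lessdot \cdots \lessdot \infty$ through $v \in \hat{P}_{min}$ restricts to a minimal-length path from $v$ to $\infty$, for otherwise the telescoping equalities would not force $\xi(v) = -\dist(v,\infty)\, n$. This is a short concatenation argument, and the rest of the proof amounts to careful case analysis using the triangle-type inequality (\ref{triangle}).
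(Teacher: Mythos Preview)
Your proposal is correct and follows essentially the same approach as the paper's proof, which establishes the cycle $(1) \Rightarrow (2) \Rightarrow (3) \Rightarrow (1)$ using Remark~\ref{mingenform}, Corollary~\ref{generatorsforcan}, and the minimality of the $t$-degree. Your version simply unpacks the terse steps: the telescoping argument along a minimal chain for $(2) \Rightarrow (3)$ and the four-case covering check for $(3) \Rightarrow (2)$ are exactly what the paper's one-line references to Corollary~\ref{generatorsforcan} are standing in for, and your factorization of $(3) \Rightarrow (1)$ through $(2)$ is harmless.
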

\begin{proof}
$(1) \implies (2)$
If $\xw$ is a minimal generator of $\omega^{(-n)}$, then clearly $\xw \in \omega^{(-n)}$, and by Remark \ref{mingenform}, $\xi(-\infty) = -\dist(-\infty,\infty)n$.
\\
$(2) \implies (3)$ Both of these inequalities come directly from Corollary \ref{generatorsforcan}
once we fix $\xi(-\infty) = -\dist(-\infty,\infty)n$ and $\xi(\infty) = 0$.
\\
$(3) \implies (1)$
The inequalities in (3) force $\xi$ to satisfy the inequalities in Corollary \ref{generatorsforcan}, so that $\xw \in \omega^{(-n)}$.  Furthermore, $\xw$ will be a minimal generator since $\xi(-\infty) = -\dist(-\infty,\infty)n$. To see this, note that for any $\xw \in \omega^{(-n)}$, we must have $\xi(-\infty) \geq -\dist(-\infty,\infty)n$.  In particular, if $\xi(-\infty) = -\dist(-\infty,\infty)n$ then the degree on $t$ is minimal and we cannot factor any element in $R$ out of $\xw$.
\end{proof}

To make counting easier, we define for each $\xw \in \omega^{(-n)}$ a map $N_{\w}: \hat{P} \rightarrow \bbZ$ by
\begin{equation*}
N_{\w}(v) = \xi(v) + \dist(v,\infty)n \text{ for all } v \in \hat{P}
\end{equation*}

We may write $N(v)$ in place of $N_{\w}(v)$  when $\w$ is implied.

\begin{corollary}\label{genmaps}
If $R$ is $\omega^{(-1)}$-level, then for every minimal generator $\xw$ of $\omega^{(-n)}$, we have a map $N: \hat{P} \rightarrow \bbZ$ such that
\begin{align}
N(v_i) &= 0 \;\; \text{ for all } v_i \in \hat{P}_{min} \text{ and}  \label{one} \\
0 \leq N(v_i) &\leq \min\{N(v_k) + \dis(v_k,v_i)n | v_k \lessdot v_i\} \text{ for } v_i \in \hat{P}_{nonmin} \label{two}
\end{align}
by letting $N(v_i) = \xi(v_i) + \dist(v_i,\infty)n$.  Furthermore, for every map $N$ satisfying the inequalities above, we can produce a minimal generator $\xw$ of $\omega^{(-n)}$ by letting $\xi(v_i) = N(v_i) - \dist(v_i,\infty)n$
\end{corollary}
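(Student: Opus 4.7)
The plan is to note that this corollary is a direct reformulation of Theorem \ref{levelgen}(3) via the affine substitution $N(v) = \xi(v) + \dist(v,\infty)n$. Because the substitution is invertible with inverse $\xi(v) = N(v) - \dist(v,\infty)n$, the bijection between minimal generators $\xw$ of $\omega^{(-n)}$ and the maps $\xi: \hat{P}\to\mathbb{Z}$ satisfying (3) of Theorem \ref{levelgen} transfers to a bijection with maps $N: \hat{P}\to\mathbb{Z}$ satisfying some corresponding set of inequalities. All that remains is to verify that those inequalities are exactly (\ref{one}) and (\ref{two}).

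First I would handle the case $v_i \in \hat{P}_{min}$: the condition $\xi(v_i) = -\dist(v_i,\infty)n$ from Theorem \ref{levelgen}(3) becomes $N(v_i) = 0$ immediately, giving (\ref{one}). Next, for $v_i \in \hat{P}_{nonmin}$, the lower bound $-\dist(v_i,\infty)n \leq \xi(v_i)$ translates to $0 \leq N(v_i)$. For the upper bound, I would rewrite
\begin{equation*}
\xi(v_i) \leq \xi(v_k) + n \quad \text{for all } v_k \lessdot v_i
\end{equation*}
as
\begin{equation*}
N(v_i) - \dist(v_i,\infty)n \leq N(v_k) - \dist(v_k,\infty)n + n,
\end{equation*}
i.e.\ $N(v_i) \leq N(v_k) + (1 + \dist(v_i,\infty) - \dist(v_k,\infty))n$. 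Since $v_k \lessdot v_i$, we have $\dist(v_k,v_i) = 1$, so the coefficient of $n$ is exactly
\begin{equation*}
\dist(v_k,v_i) + \dist(v_i,\infty) - \dist(v_k,\infty) = \disp(v_k,v_i),
\end{equation*}
recovering (\ref{two}).

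Conversely, given any $N$ satisfying (\ref{one}) and (\ref{two}), I would reverse the substitution to produce $\xi(v_i) = N(v_i) - \dist(v_i,\infty)n$, which by the same computation satisfies condition (3) of Theorem \ref{levelgen}, and hence $\xw$ is a minimal generator of $\omega^{(-n)}$. There is no real obstacle here beyond bookkeeping; the only point worth double-checking is that when $v_k \lessdot v_i$ with $v_i \in \hat{P}_{nonmin}$ but $v_k \in \hat{P}_{min}$, the upper bound still makes sense with $N(v_k) = 0$, which is built into (\ref{two}) since that bound ranges over all covers $v_k \lessdot v_i$ regardless of which subset of $\hat{P}$ they lie in.
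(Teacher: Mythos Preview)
Your proposal is correct and follows essentially the same approach as the paper: both arguments invoke Theorem \ref{levelgen}(3) and translate its conditions via the affine change of variable $N(v_i)=\xi(v_i)+\dist(v_i,\infty)n$, identifying the coefficient $1+\dist(v_i,\infty)-\dist(v_k,\infty)$ with $\disp(v_k,v_i)$ for $v_k\lessdot v_i$. Your write-up is in fact slightly cleaner than the paper's, which carries a few dropped $n$ factors in the intermediate steps.
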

\begin{proof}
Theorem \ref{levelgen} tells us that $\xw$ is a minimal generator if and only if $\xi(v_i) = -\dist(v_i,\infty)n$, i.e. if and only if $N(v_i) = 0$ for all $v_i \in \hat{P}_{min}$ and
\begin{align*}
    -\dist(v_i,\infty) &\leq \xi(v_i) \leq \min\{\xi(v_k) + n | v_k \lessdot v_i\} \text{ for } v_i \in \hat{P}_{nonmin} \\
    \Leftrightarrow 0 &\leq N(v_i) \leq \min\{\xi(v_k) + n + \dist(v_i,\infty)| v_k \lessdot v_i\} \text{ for } v_i \in \hat{P}_{nonmin} \\
    \Leftrightarrow 0 &\leq N(v_i) \leq \min\{N(v_k) -\dist(v_k,\infty) + n + \dist(v_i,\infty)| v_k \lessdot v_i\} \text{ for } v_i \in \hat{P}_{nonmin} \\
    \Leftrightarrow 0 &\leq N(v_i) \leq \min\{N(v_k) +\disp(v_k,v_i)n | v_k \lessdot v_i\} \text{ for } v_i \in \hat{P}_{nonmin}.
\end{align*}
\end{proof}

We will now count all maps $N: \hat{P} \rightarrow \bbZ$ which satisfy (\ref{one}) and (\ref{two}).  We will need the following definitions.

\begin{definition}
We call $v_i \in P$ a \textbf{top node} if it covers at least two distinct elements, i.e. if $v_k \lessdot v_i$ and  $v_j \lessdot v_i$ for some $v_j \neq v_k$.  Similarly, we call $v_i$ a \textbf{bottom node} if it is covered by at least two distinct elements, i.e. if $v_i \lessdot v_k$ and  $v_i \lessdot v_j$ for some $v_j \neq v_k$.
\end{definition}

\begin{definition}
We call $v_i \in P$ a \textbf{ starting point } if $v_i \in \hat{P}_{nonmin}$ and $v_i$ is a top node, or if $v_i \in \hat{P}_{nonmin}$ and $v_k \lessdot v_i$ for some $v_k \in \hat{P}_{min}$ or for some $v_k$ which is a bottom node.  Denote the set of starting points $\mathcal{S}$.  We call these starting points because they will correspond to the start of strings of inequalities. 
\end{definition}

\begin{theorem}\label{orderofeq}
If $R = \mathcal{R}_k[\mathcal{I}(P)]$ is $\omega^{(-1)}$-level, then for $n >> 0$ the number of generators of $\omega^{(-n)}$ over $R$ is $\Theta(n^{|\hat{P}_{nonmin}|})$.
In particular, this means $sp_R(\omega^{(-1)}) -1 = |\hat{P}_{nonmin}|$.
\end{theorem}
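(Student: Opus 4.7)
By Corollary \ref{genmaps}, minimal generators of $\omega^{(-n)}$ correspond bijectively to integer maps $N : \hat{P}_{nonmin} \to \mathbb{Z}_{\geq 0}$ satisfying $N(v_i) \leq N(v_k) + \disp(v_k, v_i) n$ for every cover $v_k \lessdot v_i$ (with the convention $N(v_k) = 0$ when $v_k \in \hat{P}_{min}$). Equivalently, one counts lattice points in $nQ$, where $Q \subset \mathbb{R}^{\hat{P}_{nonmin}}$ is the rational polytope cut out by the same inequalities with $n = 1$. Once the count is established as $\Theta(n^{|\hat{P}_{nonmin}|})$, the equality $sp_R(\omega^{(-1)}) - 1 = |\hat{P}_{nonmin}|$ follows from the quasi-polynomial behavior of the generator-counting function recorded in the proof of Proposition \ref{anticanonicalbound}.

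The upper bound is straightforward: iterating the defining inequality upward from $-\infty$ gives $N(v_i) \leq \dist(-\infty, v_i) n$ for each $v_i \in \hat{P}_{nonmin}$, so each of the $|\hat{P}_{nonmin}|$ coordinates takes $O(n)$ integer values, and the total count is $O(n^{|\hat{P}_{nonmin}|})$.

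For the matching lower bound I would show that $Q$ is full-dimensional by exhibiting an interior point. Fix a linear extension $v_1, \dots, v_m$ of the poset order restricted to $\hat{P}_{nonmin}$ and choose $\delta \in (0, 1/m)$; set $N(v_j) := (m - j + 1)\delta$. Every defining inequality is then strict at this $N$: for covers $v_k \lessdot v_i$ with both endpoints in $\hat{P}_{nonmin}$, the linear extension puts $v_k$ before $v_i$, so $N(v_k) > N(v_i)$ and the inequality $N(v_i) \leq N(v_k) + \disp(v_k, v_i) n$ holds strictly regardless of the value of $\disp$; for covers $v_k \lessdot v_i$ with $v_k \in \hat{P}_{min}$ and $v_i \in \hat{P}_{nonmin}$, a direct distance computation gives $\disp(v_k, v_i) = 1 + \dist(v_i, \infty) - \dist(v_k, \infty) \geq 1$ (since $v_k \in \hat{P}_{min}$ forces $\dist(-\infty, v_k) + \dist(v_k, \infty) = \dist(-\infty, \infty)$, while $v_i \in \hat{P}_{nonmin}$ forces the reverse strict inequality through $v_i$), so the constraint $N(v_i) \leq \disp(v_k, v_i) n$ dominates $m\delta < 1$. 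Consequently $Q$ has full dimension $|\hat{P}_{nonmin}|$, and $nQ$ contains a box of side $\Theta(n)$ supplying $\Omega(n^{|\hat{P}_{nonmin}|})$ lattice points.

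The main obstacle I anticipate is the worry that covers with zero disparity could behave like equality constraints and collapse $Q$ to a lower-dimensional face. My construction sidesteps this by forcing $N$ to strictly decrease along the linear extension, which automatically gives strict inequalities for every cover regardless of whether $\disp$ vanishes. A more combinatorial alternative, foreshadowed by the definition of starting points, is to decompose $\hat{P}_{nonmin}$ into chains above starting points---each non-starting element has a unique lower cover in $\hat{P}_{nonmin}$ by the definition of $\mathcal{S}$---and count along each chain, using $\binom{M+k}{k} = \Theta(M^k)$ to handle the weakly-decreasing sequences that arise from zero-disparity covers.
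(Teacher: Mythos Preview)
Your argument is correct, and the key observation that $\disp(v_k,v_i)\geq 1$ whenever $v_k\in\hat{P}_{min}$, $v_i\in\hat{P}_{nonmin}$, and $v_k\lessdot v_i$ is exactly what makes the interior-point construction work; the parenthetical justification you give can be unpacked into the clean contrapositive: if $\disp(v_k,v_i)=0$ then a minimal path from $v_k$ to $\infty$ passes through $v_i$, and concatenating with a minimal path from $-\infty$ to $v_k$ forces $v_i\in\hat{P}_{min}$.

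Your route, however, is genuinely different from the paper's. You encode the generators as lattice points in a dilated rational polytope and prove full-dimensionality via a single interior point built from a linear extension, then appeal to Ehrhart-type counting. The paper instead gives an explicit combinatorial lower bound: it uses the set $\mathcal{S}$ of starting points to cut $\hat{P}_{nonmin}$ into disjoint saturated chains, assigns to each $v$ an interval of the form $[n/2^{l_v+1},\,n/2^{l_v}]$ (where $l_v$ counts starting points below $v$), and verifies directly that any choice inside these nested intervals satisfies (\ref{one})--(\ref{two}); the count then factors as a product of binomial coefficients, one per chain. Your argument is shorter and more conceptual for this theorem in isolation. The paper's construction is more laborious but is not throwaway: exactly the same nested-interval scheme is reused verbatim in the proof of Theorem~\ref{cxflevel}, where one needs an explicit family of generators (not just a count) with controlled base-$p$ digits on a distinguished chain and the $2^{-l_v}$ bounds off that chain, in order to invoke Proposition~\ref{onechainte}. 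So the trade-off is elegance here versus reusability later; the chain decomposition you sketch as an alternative in your final paragraph is in fact the paper's primary method.
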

\begin{proof}
By Corollary \ref{genmaps}, it suffices to show that there are $\Theta(n^{|\hat{P}_{nonmin}|})$ maps $N: \hat{P} \rightarrow \bbZ$ that satisfy (\ref{one}) and (\ref{two}).

\textbf{Upper bound:} An upper bound for all such maps is those which instead satisfy
\begin{align*}
    N(v) &= 0 \text{ for } v \in \hat{P}_{min}, \text{ and } \\
    0 \leq N(v) &\leq M_vn \text{ for } v \in \hat{P}_{nonmin}
\end{align*}
for any $M_v > \sum_{v_j \lessdot v_l \leq v} \disp(v_j,v_l)n$.  For each $v \in \hat{P}_{nonmin}$, we have $M_vn = \Theta(n)$, options for $N(v)$.  Then overall, this gives $\Theta(n^{|\hat{P}_{nonmin}|})$ options for $N$ satisfying these inequalities.

\textbf{Lower bound:} 
For any $v \in \hat{P}_{nonmin}$, let $l_v$ be the number of starting points $\leq v$, and suppose $n > 2^{l_v + 1}$ for any $v$.
Then consider the following inequalities:
\begin{align}
    N(v) &= 0 \text{ for } v \in \hat{P}_{min}, \label{string1} \\
    \frac{n}{2^{l_v +1}} \leq N(v) &\leq \frac{n}{2^{l_v}} \text{ for } v \in \mathcal{S}, \text{ and} \label{string2} \\
    \frac{n}{2^{l_v+1}} \leq N(v) &\leq N(\tilde{v}) \text{ for } v \in \hat{P}_{nonmin}, v \notin \mathcal{S} \label{string3} 
\end{align}
where $\tilde{v}$ is the unique element such that  $\tilde{v} \lessdot v$.

We will show that if $N$ satisfies these inequalities, then it must also satisfy (\ref{one}) and (\ref{two}).  Clearly this holds for $v \in \hat{P}_{min}$.  Now consider $v \in \hat{P}_{nonmin}$.

Case 1: Suppose $v \in \mathcal{S}$, and consider any $w \lessdot v$.   Then $l_w +1 \leq l_v$.  
If $w \in \hat{P}_{nonmin}$, we have by definition $\frac{n}{2^{l_w+1}} \leq N(w)$.  Then
\begin{equation*}
    N(v) \leq \frac{n}{2^{l_v}} \leq \frac{n}{2^{l_w +1}} \leq N(w) \leq \disp(w,v)n + N(w)
\end{equation*}
as desired (and clearly $\frac{n}{2^{l_v+1}} \leq N(v) \implies 0 \leq N(v)$).  Now if $w \in \hat{P}_{min}$, we have $N(w) = 0$ and $\disp(w,v) > 0$ so that
\begin{equation*}
    N(v) \leq \frac{n}{2^{l_v}} \leq n \leq \disp(w,v)n = \disp(w,v)n + N(w).
\end{equation*}
Thus, for all $v \in \mathcal{S}$ we have:
\begin{equation*}
    0 \leq N(v) \leq \min\{\disp(w,v)n + N(w)| w \lessdot v\}.
\end{equation*}

Case 2: Now suppose $v \in \hat{P}_{nonmin}$, but $v \notin \mathcal{S}$.  Then there is a unique $\tilde{v}$ such that $\tilde{v} \lessdot v$, and we know $\tilde{v} \in \hat{P}_{nonmin}$.  Then we have
\begin{equation*}
    \frac{n}{2^{l_v+1}} \leq N(v) \leq N(\tilde{v})
\end{equation*}
and so for all $v \in \hat{P}_{nonmin}, v \notin S$, we have:
\begin{equation*}
    0 \leq N(v) \leq N(\tilde{v}) + \disp(\tilde{v},v)n = \min\{\disp(w,v)n + N(w)| w \lessdot v\} 
\end{equation*}
as desired.

Now we count all $N$ satisfying (\ref{string1})-(\ref{string3}).  Our set of starting point consists of top nodes, elements covering bottom nodes, and elements covering elements in $\hat{P}_{min}$.  Thus, we have broken $\hat{P}_{nonmin}$ into disjoint upwards paths which begin at starting points.  Each of these upwards paths is of the form  $v_1 \lessdot \dots \lessdot v_m$ where $v_1 \in \mathcal{S}$ and $v_m \lessdot y$ for some $y \in \mathcal{S}$ or $y \in \hat{P}_{min}$ and each $v \in \hat{P}_{nonmin}$ is in exactly one such path.  Each of these paths then gives us a string of inequalities of the form:
\begin{equation*}
    \frac{n}{2^{l_{v_1} + 1}} \leq N(v_m) \leq ... \leq N(v_1) \leq \frac{n}{2^{l_{v_1}}}
\end{equation*}

Then for each such string of $m$ elements, we get 
$$\binom{\frac{n}{2^{l_{v_1}+1}}}{m} = \Theta(n^{m})$$ 
options for $N(v_1),\dots,N(v_m)$, and so there are $\Theta(n^{|\hat{P}_{nonmin}|})$ choices of $\{N(v)\}_{v \in \hat{P}}$ which satisfy (\ref{string1})-(\ref{string3}).
Then there are at least $\Theta(n^{|\hat{P}_{nonmin}|})$ choices of $N$ satisfying (\ref{one}) and (\ref{two}).  
Along with our upper bound, this tells us that $sp_R(\omega^{(-1)}) -1 = |\hat{P}_{nonmin}|$.
\end{proof}

In order to compute Frobenius complexity, we will want to study which of the minimal generators of $\omega^{(1-p^e)}$ can be written as $a*b$ for $a$ and $b$ of smaller degrees.
We can show that if a minimal generator $\xw$ splits as $\xw = \xwpp*\xwp$, then $\xwp$ and $\xwpp$ are also minimal generators of the same type.

\begin{proposition}\label{type1}
If $R$ is a Hibi ring such that $\xw$ is a minimal generator of $\omega^{(1-p^{e})}$ with $\xi(-\infty) = -\dist(-\infty
,\infty)(p^e -1)$ and 

\begin{equation*}
    \xw = \xwpp * \xwp
\end{equation*}
for $\xwp \in \omega^{(1-p^{e'})}$ and $\xwpp \in \omega^{(1-p^{e''})}$, then $\xwp,\xwpp$ are minimal generators of $\omega^{(1-p^{e'})}$ and $\omega^{(1-p^{e''})}$ of the same type (i.e. with $\xi'(-\infty) = -\dist(-\infty
,\infty)(p^{e'} -1)$ and $\xi''(-\infty) = -\dist(-\infty
,\infty)(p^{e''} -1)$).
\end{proposition}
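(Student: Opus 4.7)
The plan is to convert the factorization into a pointwise identity on $\hat{P}$ and then squeeze it against the basic lower bound~(\ref{lowerboundwi}). By the multiplication rule of $T(\oplus_n \omega^{(-n)})$ in Theorem~\ref{structureofCR}, the equation $\xw = \xwpp * \xwp$ is equivalent to the pointwise identity
\[
\xi(v) \;=\; p^{e'}\xi''(v) + \xi'(v) \qquad \text{for all } v \in \hat{P},
\]
with $e = e' + e''$. Evaluating at $v = -\infty$ and using the hypothesis $\xi(-\infty) = -\dist(-\infty,\infty)(p^e - 1)$ gives
\[
p^{e'}\xi''(-\infty) + \xi'(-\infty) \;=\; -\dist(-\infty,\infty)(p^{e'+e''}-1).
\]

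Next I would apply the baseline inequality~(\ref{lowerboundwi}) at $v = -\infty$ separately to $\xwp \in \omega^{(1-p^{e'})}$ and $\xwpp \in \omega^{(1-p^{e''})}$, obtaining $\xi'(-\infty) \geq -\dist(-\infty,\infty)(p^{e'}-1)$ and $\xi''(-\infty) \geq -\dist(-\infty,\infty)(p^{e''}-1)$. Multiplying the second by $p^{e'}$ and adding the first yields
\[
p^{e'}\xi''(-\infty) + \xi'(-\infty) \;\geq\; -\dist(-\infty,\infty)\bigl[p^{e'}(p^{e''}-1) + (p^{e'}-1)\bigr],
\]
and the bracketed quantity collapses cleanly to $p^{e'+e''}-1$. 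Thus the lower bound meets the equality on the nose, which forces both of the individual inequalities to be tight. This gives $\xi'(-\infty) = -\dist(-\infty,\infty)(p^{e'}-1)$ and $\xi''(-\infty) = -\dist(-\infty,\infty)(p^{e''}-1)$, i.e.\ the ``same type'' statement.

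Finally I would deduce minimality of $\xwp$ (and symmetrically $\xwpp$) by the same argument used in the (3)$\Rightarrow$(1) direction of Theorem~\ref{levelgen}. Every generator $x_I = t \prod_{v \in I} x_v$ of $R$ carries a factor of $t$, so every non-unit element of $R$ has strictly positive $t$-degree. If $\xwp$ could be written $\xwp = r \cdot \underline{x}^{\eta}$ with $r \in \m$ and $\underline{x}^{\eta} \in \omega^{(1-p^{e'})}$, then the $t$-degree $\eta(-\infty)$ would drop strictly below $\xi'(-\infty) = -\dist(-\infty,\infty)(p^{e'}-1)$, violating~(\ref{lowerboundwi}) applied to $\underline{x}^{\eta}$. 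Hence $\xwp$ is a minimal generator, and the identical argument works for $\xwpp$.

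The proof is essentially a combinatorial squeeze: the only delicate step is confirming that the weighted sum $p^{e'}(p^{e''}-1) + (p^{e'}-1)$ telescopes to exactly $p^{e'+e''}-1$. Without this precise cancellation the sum of the lower bounds would be strict and we could not force both $\xi'(-\infty)$ and $\xi''(-\infty)$ simultaneously to their minima; with it, the proof reduces to unwinding the definition of $*$ and applying~(\ref{lowerboundwi}) twice.
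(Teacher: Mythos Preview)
Your proposal is correct and follows essentially the same approach as the paper: unwind the $*$-multiplication to get $\xi(-\infty) = p^{e'}\xi''(-\infty) + \xi'(-\infty)$, apply the lower bound~(\ref{lowerboundwi}) to each factor, and observe that the sum of the bounds hits $\xi(-\infty)$ exactly, forcing both inequalities to be equalities; minimality then follows from minimality of the $t$-degree. The paper's proof is terser but logically identical.
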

\begin{proof}
By assumption, $\xi(-\infty) = -\dist(-\infty,\infty)(\pe)$. If $\xw = \xwpp*\xwp$, then we must have:
\begin{equation*}
    \xi(-\infty) = \xi''(-\infty)p^{e'} + \xi'(-\infty)
\end{equation*}
so that since $\xi'(-\infty) \geq -\dist(-\infty,\infty)(p^{e'} -1)$ and $\xi''(-\infty) \geq -\dist(-\infty,\infty)(p^{e''} -1)$  we must have that these are actually equalities. Then $\xwp$ and $\xwpp$ are indeed minimal generators of $\omega^{(1-p^{e'})}$ and $\omega^{(1-p^{e''})}$ since their degrees on $t$ are minimal.
\end{proof}

We know if $R = \mathcal{R}_k[\mathcal{I}(P)]$ is a Gorenstein Hibi ring with canonical $\omega$ where $k$ is a field of characteristic $p$, then $sp_R(\omega^{(-1)})-1 = 0$ (since $\omega^{(-n)}$ is generated by $1$ element for every $n$), and we have seen $cx_F(R) = -\infty$ \cite{EYTheFrobcompofalocalringofprimechar}.  
Otherwise, we will show that when $R$ is $\omega^{(-1)}$-level, we have $\lim_{p \rightarrow \infty} cx_F(R) = sp_R(\omega^{(-1)}) -1$.  
We first need the following proposition.

\begin{proposition}\label{levelcase}
If $R = \mathcal{R}_k[\mathcal{I}(P)]$ is $\omega^{(-1)}$-level but not Gorenstein, then we have some upwards path $(v_0,\dots,v_k)$ of length $k$ such that $(v_1, \dots, v_k)$ is a maximal length upwards path in $\hat{P}_{nonmin}$ and 
$$\sum_{i = 1}^k \disp(v_{i-1},v_i) < k.$$
\end{proposition}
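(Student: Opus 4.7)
The plan is to reduce the claim to a strict distance inequality and then establish it by contradiction with $\omega^{(-1)}$-levelness.

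First, since $R$ is not Gorenstein, Theorem \ref{pure} gives that $P$ is not pure, so $\hat{P}$ has some maximal chain of length strictly greater than $\ell := \dist(-\infty,\infty)$. In particular $\hat{P}_{nonmin} \neq \varnothing$, so I would fix a maximum length upward path $(v_1,\dots,v_k)$ in $\hat{P}_{nonmin}$. Since $v_1 \neq -\infty$, some $v_0$ with $v_0 \lessdot v_1$ exists in $\hat{P}$, and by the maximality of $k-1$ no such $v_0$ can lie in $\hat{P}_{nonmin}$; hence $v_0 \in \hat{P}_{min}$. Because $\dist(v_{i-1},v_i) = 1$ for each cover, telescoping gives
\[
\sum_{i=1}^k \disp(v_{i-1},v_i) \;=\; k + \dist(v_k,\infty) - \dist(v_0,\infty),
\]
so the desired estimate $\sum_i \disp(v_{i-1},v_i) < k$ is equivalent to the strict inequality $\dist(v_0,\infty) > \dist(v_k,\infty)$ for some admissible choice of $(v_0,v_1,\dots,v_k)$.

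To prove this strict inequality I would argue by contradiction. Suppose instead that for every maximum length path $(v_1,\dots,v_k)$ in $\hat{P}_{nonmin}$ and every $v_0 \lessdot v_1$ in $\hat{P}_{min}$, one has $\dist(v_0,\infty) \leq \dist(v_k,\infty)$. Under this supposition, I would construct a minimal generator $\xw \in \omega^{(-1)}$ with $\xi(-\infty) = -\ell + 1$, contradicting $\omega^{(-1)}$-levelness via Remark \ref{mingenform}. The candidate $\xi$ is a $\{0,1\}$-valued perturbation of the tight base map $v \mapsto -\dist(v,\infty)$: add $1$ on a subset $S \subseteq \hat{P}$ containing $-\infty$ but not $\infty$ that is downward closed along shortest upward paths to $\infty$ (so that the covering inequalities of Corollary \ref{generatorsforcan} are preserved, and $\xw \in \omega^{(-1)}$ is immediate). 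The supposition on the longest non-minimal paths is what lets $S$ be chosen so that, for every nonempty poset ideal $I \subseteq P$, subtracting the exponent vector of $x_I$ from $\xi$ violates some covering inequality, ensuring $\xw$ is minimal.

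The main obstacle will be the explicit choice of the bumping set $S$ and the case-by-case verification of non-divisibility by $x_I$. I expect this verification to reduce to a case analysis on how $I$ meets $S$ and on the edges crossing the $\hat{P}_{min}$--$\hat{P}_{nonmin}$ interface; the non-extendability of the chosen longest non-minimal path together with the supposed distance inequality should localize the potentially offending ideals $I$ to a manageable set, so that each can be ruled out by a single violated covering inequality among those involving $-\infty$, the path $(v_0,v_1,\dots,v_k)$, or an extension $v_k \lessdot v_{k+1}$ into $\hat{P}_{min}$.
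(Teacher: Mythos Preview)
Your setup and the telescoping reduction are correct and match the paper exactly: the paper also fixes a maximal-length upward path $(v_1,\dots,v_k)$ in $\hat P_{nonmin}$, picks $v_0\lessdot v_1$ (forced into $\hat P_{min}$ by maximality), telescopes $\sum_i\disp(v_{i-1},v_i)=k+\dist(v_k,\infty)-\dist(v_0,\infty)$, and proceeds by contradiction from $\dist(v_k,\infty)\ge\dist(v_0,\infty)$.

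Where you diverge is in the construction of the contradicting minimal generator, and this is where the gap lies. The paper does \emph{not} use a $\{0,1\}$-perturbation of $v\mapsto -\dist(v,\infty)$. Instead it chooses $v_{k+1}\in\hat P_{min}$ with $v_k\lessdot v_{k+1}$ on a shortest $v_k\to\infty$ path, assembles an explicit upwards-minimal mixed path $q=u_1\cup d\cup u_2$ (with $u_1$ a shortest $-\infty\to v_{k+1}$ path, $d$ the reverse of $(v_1,\dots,v_k)$, and $u_2$ a shortest $v_0\to\infty$ path), proves $u_1\cap u_2=\varnothing$ using the distance hypothesis, and then defines $\xi$ by piecewise distance formulas on $q$ and by a minimum over $q$ off $q$. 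Minimality is obtained not by checking divisibility against every $x_I$, but by invoking Proposition~\ref{musthavemixedpath} once $\xw$ is shown compatible with $q$; the bulk of the work is a multi-case verification that the covering inequalities hold and that $\xi(v_{k+1})>-\dist(v_{k+1},\infty)$.

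Your $\{0,1\}$-bump proposal is attractive, but as written it is only a plan: you never specify $S$, and choosing $S$ is the entire content. Two concrete issues to keep in mind if you pursue it. First, minimality must rule out division by $x_I$ for \emph{every} $I\in\mathcal I(P)$, including $I=\varnothing$ (where $x_\varnothing=t$), not only nonempty $I$. Second, and more seriously, by Proposition~\ref{musthavemixedpath} your $\xw$ is a minimal generator iff it is compatible with some upwards-minimal mixed path; for your $\xi$ one computes $\Delta_{i,j}=\disp(v_i,v_j)+\chi_S(v_i)-\chi_S(v_j)$, so compatibility along an upward cover forces $\disp(v_i,v_j)=\chi_S(v_j)-\chi_S(v_i)\in\{0,1\}$. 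Thus a $\{0,1\}$-bump can only be compatible with mixed paths whose every upward cover has disparity at most $1$, and it is not clear the contradiction hypothesis alone furnishes such a path together with a consistent $S$. The paper's $\xi$ is not of this restricted form in general (its value at $v_{k+1}$ is a minimum over several distance expressions and can exceed $-\dist(v_{k+1},\infty)$ by more than $1$), so insisting on a $\{0,1\}$-bump may be a genuine loss. Either exhibit $S$ explicitly and carry out the verification, or switch to the paper's mixed-path construction and use Proposition~\ref{musthavemixedpath}.
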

  
We will prove this by contradiction.  To do so, we will need additional notation, so we leave the proof for the end of this section.

\begin{proposition}\label{onechainte}
Fix some $d_1,\dots,d_k,e \in \bbZ_{\geq 0}$, $d_1, e >0$ $p >>0$, and suppose $\sum_{i=1}^k d_i < k$ and fix $0 < e' < e$, with $e' + e'' = e$.
Then if $(N_1, \dots ,N_k) \in \mathbb{Z}^k$ satisfies:
\begin{align*}
0 \leq &N_1 \leq d_1(\pe)
\\
(*)_e\;\;\;\;\;\;\;\; 0 \leq &N_2 \leq N_1 + d_2(\pe)
\\
&\vdots 
\\
0 \leq &N_k \leq N_{k-1} + d_k(\pe)
\end{align*}
but also satisfies
\begin{equation*}
    [N_i]_{p^{e'}} > [N_{i-1}]_{p^{e'}} \text{ for all } 1 < i \leq k
\end{equation*}
then $N_i$ cannot be written $N_i = N_i''p^{e'} + N_i'$ with $N_i',N_i''$  which satisfy $(*)_{e'}$ and $(*)_{e''}$, respectively.

\end{proposition}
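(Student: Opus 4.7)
The plan is to argue by contradiction. Suppose such a decomposition $N_i = N_i'' p^{e'} + N_i'$ exists, with $(N_1',\dots,N_k')$ satisfying $(*)_{e'}$ and $(N_1'',\dots,N_k'')$ satisfying $(*)_{e''}$. Reducing modulo $p^{e'}$ kills the $N_i'' p^{e'}$ summand, so $[N_i]_{p^{e'}} = [N_i']_{p^{e'}}$, and the hypothesis transfers to the primed system as $[N_i']_{p^{e'}} > [N_{i-1}']_{p^{e'}}$ for all $1 < i \leq k$.

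Next I would write each $N_j'$ in base $p^{e'}$ as $N_j' = a_j p^{e'} + b_j$ with $b_j = [N_j']_{p^{e'}} \in [0, p^{e'})$ and $a_j = (N_j' - b_j)/p^{e'}$. Since $N_j' \geq 0$ and $b_j < p^{e'}$, we have $a_j \geq 0$. Substituting into $(*)_{e'}$ gives
\[
a_1 p^{e'} + b_1 \leq d_1(p^{e'} - 1) \quad \text{and} \quad (a_i - a_{i-1}) p^{e'} + (b_i - b_{i-1}) \leq d_i(p^{e'} - 1)
\]
for $1 < i \leq k$. The transferred hypothesis ensures $b_i - b_{i-1} \geq 1$, so the second inequality yields $(a_i - a_{i-1}) p^{e'} \leq d_i p^{e'} - d_i - 1 < d_i p^{e'}$; since $a_i - a_{i-1}$ is an integer we conclude $a_i - a_{i-1} \leq d_i - 1$. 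The base case gives $a_1 \leq d_1 - 1$ similarly, using $d_1 \geq 1$.

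Telescoping then yields
\[
0 \leq a_k = a_1 + \sum_{i=2}^k (a_i - a_{i-1}) \leq \sum_{i=1}^k (d_i - 1) = \Big(\sum_{i=1}^k d_i\Big) - k < 0,
\]
which is a contradiction. The main obstacle is really just the careful bookkeeping: the strict-modular hypothesis forces each link of the chain to consume at least one full unit from the $d_i$-budget (the ``$-1$'' in $d_i - 1$ arises precisely because $b_i - b_{i-1} \geq 1$), and telescoping exhausts the total budget $\sum d_i < k$. The assumption $p \gg 0$ plays no role in the argument itself; I expect it is stated only to ensure the premise is non-vacuous, i.e.\ that tuples simultaneously satisfying $(*)_e$ and the strict modular inequalities actually exist.
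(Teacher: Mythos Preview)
Your proof is correct and follows essentially the same route as the paper: assume a splitting exists, transfer the strict modular inequality to the primed system via $[N_i]_{p^{e'}} = [N_i']_{p^{e'}}$, write $N_i' = a_i p^{e'} + b_i$ (the paper uses $\lfloor N_i'/p^{e'}\rfloor$ and $[N_i']_{p^{e'}}$), deduce $a_i - a_{i-1} \leq d_i - 1$ from $b_i > b_{i-1}$, and telescope to the contradiction $0 \leq a_k \leq \sum d_i - k < 0$. Your remark about $p \gg 0$ is also apt: the paper invokes $p > \max\{k,d_i\}$ to evaluate $\lfloor d_i(p^{e'}-1)/p^{e'}\rfloor = d_i - 1$, but your formulation $(a_i - a_{i-1})p^{e'} \leq d_i p^{e'} - d_i - 1 < d_i p^{e'}$ avoids this and works for all $p$.
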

\begin{proof}
We assume $p > \max\{ k, d_i\}$.  Suppose
\begin{equation}\label{contradictiontosplitting}
[N_i]_{p^{e'}} > [N_{i-1}]_{p^{e'}} \text{ for all } 1 < i \leq k
\end{equation}
We will show that we will not be able to write $N_i = N_i''p^{e'} + N_i'$, for any $N_i',N_i''$ satisfying $(*)_{e'}$ and $(*)_{e''}$. 

Suppose for contradiction $N_i = N_i''p^{e'} + N_i'$, and $N_i',N_i''$ satisfy $(*)_{e'}$ and $(*)_{e''}$.

Note that, by definition, we have
\begin{align*}
    N_1' &\leq  d_1(p^{e'} -1) \\
    \implies \left\lfloor\frac{N_1'}{p^{e'}}\right\rfloor &\leq  \left\lfloor \frac{ d_1(p^{e'} -1)}{p^{e'}} \right\rfloor \\
    &= d_1 - 1
\end{align*}

Also, for all $i > 1$, we have that
\begin{align*}
    N_i' &\leq N_{i-1}' + d_i(p^{e'} -1)\\
    \implies \left\lfloor \frac{N_i'}{p^{e'}} \right\rfloor p^{e'} + [N_i']_{p^{e'}} &\leq \left\lfloor \frac{N_{i-1}'}{p^{e'}} \right\rfloor p^{e'} + [N_{i-1}']_{p^{e'}} + d_i(p^{e'} -1)   
\end{align*}
so that since $[N_i']_{p^{e'}} = [N_i]_{p^{e'}} > [N_{i-1}]_{p^{e'}} = [N_{i-1}']_{p^{e'}}$, we must have
\begin{align*}
    \left\lfloor \frac{N_i'}{p^{e'}} \right\rfloor p^{e'}  &< \left\lfloor \frac{N_{i-1}'}{p^{e'}} \right\rfloor p^{e'} + d_i(p^{e'} -1)   \\
    \implies \left\lfloor \frac{N_i'}{p^{e'}} \right\rfloor  &< \left\lfloor \frac{N_{i-1}'}{p^{e'}} \right\rfloor + \left\lfloor \frac{d_i(p^{e'} -1)}{p^{e'}}\right\rfloor \\
    &=  \left\lfloor \frac{N_{i-1}'}{p^{e'}} \right\rfloor + d_i -1.
\end{align*}

Then in particular, we have that $\left\lfloor\frac{N_1'}{p^{e'}}\right\rfloor \leq d_1 - 1$ and $\left\lfloor \frac{N_i'}{p^{e'}} \right\rfloor \leq \left\lfloor \frac{N_{i-1}'}{p^{e'}} \right\rfloor + d_i -1$ for $i > 1$ so that
\begin{equation*}
    \left\lfloor \frac{N_k'}{p^{e'}} \right\rfloor \leq \left(\sum_{i=1}^k d_i\right) - k.
\end{equation*}
This is impossible since $\sum_{i=1}^k d_i < k$, and so no such splitting can exist.
\end{proof}

\begin{theorem}\label{cxflevel}
If $\mathcal{R}_{\mathbb{F}_p}[\mathcal{I}(P)]$ is an $\omega^{(-1)}$-level Hibi ring, then
\begin{equation*}
    \lim_{p \rightarrow \infty} cx_F(\mathcal{R}_{\mathbb{F}_p}[\mathcal{I}(P)]) = |\hat{P}_{nonmin}|.
\end{equation*}
\end{theorem}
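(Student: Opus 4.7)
The upper bound is immediate from the machinery already in place: Proposition~\ref{anticanonicalbound} gives $\lim_{p\to\infty} cx_F(R) \leq sp_R(\omega^{(-1)}) - 1$, and Theorem~\ref{orderofeq} identifies this quantity with $|\hat{P}_{nonmin}|$. The remaining task is to exhibit, for all sufficiently large $p$, a family of at least $\Omega(p^{e|\hat{P}_{nonmin}|})$ minimal generators of $\omega^{(1-p^e)}$ that cannot be written as $\xwpp*\xwp$ for any $e',e''>0$ with $e'+e''=e$; doing so gives $c_e(\mathcal{C}(R))=\Omega(p^{e|\hat{P}_{nonmin}|})$, hence $cx_F(R)\geq|\hat{P}_{nonmin}|$ for large $p$, and combined with the upper bound this yields the theorem.

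By Corollary~\ref{genmaps}, such generators are parametrized by maps $N\colon \hat{P}\to\mathbb{Z}_{\geq 0}$ satisfying (\ref{one}) and (\ref{two}) with $n=p^e-1$, while Proposition~\ref{type1} translates any splitting $\xw=\xwpp*\xwp$ into a decomposition $N=N''p^{e'}+N'$ with $N'$ and $N''$ satisfying the analogous inequality systems. I would then invoke Proposition~\ref{levelcase} to fix an upwards path $(v_0,v_1,\ldots,v_k)$ with $v_0\in\hat{P}_{min}$, with $(v_1,\ldots,v_k)$ a maximal upwards path in $\hat{P}_{nonmin}$, and with $\sum_{i=1}^k d_i<k$ for $d_i:=\disp(v_{i-1},v_i)$; the maximality of the subpath forces $d_1\geq 1$. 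Setting $N_i:=N(v_i)$ (so $N_0=0$), the restriction of (\ref{two}) to this chain is precisely the system $(*)_e$ of Proposition~\ref{onechainte}. If $[N_i]_{p^{e'}}>[N_{i-1}]_{p^{e'}}$ holds for every $i\in\{2,\ldots,k\}$ and every $e'\in\{1,\ldots,e-1\}$, then Proposition~\ref{onechainte} (applied at each such $e'$) forbids every splitting of $N$ restricted to the path, and Proposition~\ref{type1} upgrades this to a prohibition of any splitting of $\xw$ itself.

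Writing $N_i=\sum_j a_j^{(i)}p^j$ in base $p$, a short induction on $e'$ shows that the combined condition above is implied by $a_0^{(i)}>a_0^{(i-1)}$ strictly and $a_j^{(i)}\geq a_j^{(i-1)}$ weakly for $1\leq j\leq e-2$, leaving $a_{e-1}^{(i)}$ constrained only by $N_i\leq N_{i-1}+d_i(p^e-1)$. Since strictly increasing length-$k$ sequences in $\{0,\ldots,p-1\}$ number $\binom{p}{k}$ and weakly increasing ones number $\binom{p+k-1}{k}$, the lower $e-1$ digits contribute $\Omega(p^{k(e-1)})$ configurations and the top digit contributes another $\Omega(p^k)$, for $\Omega(p^{ek})$ on-path choices. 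Combining with $\Omega(p^{e(|\hat{P}_{nonmin}|-k)})$ independent extensions to the off-path vertices of $\hat{P}_{nonmin}$---constructed as in the lower-bound count in the proof of Theorem~\ref{orderofeq}---this produces $\Omega(p^{e|\hat{P}_{nonmin}|})$ non-splittable minimal generators and completes the argument. The principal obstacle will be reconciling the digit-wise monotonicity along the path with the chain inequalities at edges where $d_i=0$: there $N_i\leq N_{i-1}$ is forced, so the mandated strict increase in digit zero must be offset by a decrease in the free top digit $a_{e-1}^{(i)}$, which still leaves the on-path count at leading order $\Omega(p^{ek})$.
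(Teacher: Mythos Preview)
Your proposal follows the same strategy as the paper: the upper bound comes from Proposition~\ref{anticanonicalbound} plus Theorem~\ref{orderofeq}, and the lower bound imposes base-$p$ digit monotonicity along the chain furnished by Proposition~\ref{levelcase} so that Proposition~\ref{onechainte} blocks every splitting, with off-path vertices handled via the Theorem~\ref{orderofeq} lower-bound scheme. The only point needing more than your closing sketch is that the off-path extensions are \emph{not} independent of the on-path values---cross-edges in~(\ref{two}) link them whenever an on-path $v_i$ is a starting point or is covered by an off-path vertex---so the paper does not leave the top digit $a_{e-1}^{(i)}$ free but places it in the same nested intervals $\bigl[\tfrac{p}{2^{l_{v_i}+1}},\,\tfrac{p-2^{l_{v_i}}}{2^{l_{v_i}}}\bigr]$ used in Theorem~\ref{orderofeq}; this simultaneously absorbs the $d_i=0$ offset you flag and forces the on-path $N(v_i)$ to land in the ranges (\ref{string2})--(\ref{string3}), so that the Theorem~\ref{orderofeq} argument applies uniformly and the top-digit count is still $\Theta(p^k)$.
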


\begin{proof}
By Proposition \ref{levelcase}, we may assume that $P$ has some upwards path $(v_0,\dots,v_k)$ such that $v_0 \in \hat{P}_{min}$ and $v_i \in \hat{P}_{nonmin}$ for $i >1$ with $\sum_{i= 1}^k \disp(v_{i-1},v_i) < k$, and $(v_1, \dots, v_k)$ is a maximal length upwards path in $\hat{P}_{nonmin}$.  
By Corollary \ref{genmaps}, it suffices to show for $p >> 0$ that there are $\Theta(p^{e|\hat{P}_{nonmin}|})$ choices of maps $N: \hat{P} \rightarrow \mathbb{Z}$ 
which satisfy (\ref{one}) and (\ref{two}) for $n = \pe$ 
but cannot split as $N(v_i) = N''(v_i)p^{e'} + N'(v_i)$ where $N',N''$ satisfy (\ref{one}) and (\ref{two}) for $n = p^{e'}-1$ and $n = p^{e''} -1$, respectively.

\textbf{Lower bound:} We will construct and count $N$ which satisfy the property above.
In particular, we will construct $N$ where $N(v_i)$ cannot split for $1 \leq i \leq k$ using the method in Proposition \ref{onechainte}.

Suppose $p > \sum_{i = 1}^k \disp(v_{i-1},v_i)$.  For $0 \leq i \leq k$, we will define $N(v_i)$ by picking $0 \leq N_{i,j} \leq p-1$ for $0 \leq j \leq e$ and letting $N(v_i):= N_{i,0} + N_{i,1}p + \dots+ N_{i,e}p^e$.
Let $\mathcal{S}$ be the set of starting points as defined in Proposition \ref{orderofeq}.
Note that since $(v_1, \dots , v_k)$ is maximal in $\hat{P}_{nonmin}$, we have that for any $w$ with $v_k \lessdot w$, we know that $w \in \hat{P}_{min}$.
Define $l_v$ for every $v \in \hat{P}_{nonmin}$ as in Proposition \ref{orderofeq}: for any $v \in \hat{P}_{nonmin}$, let $l_v$ be the number of $y \in \mathcal{S}$ such that $y \leq v$, and suppose $p> 2^{l_v}$ for every $l_v$.
Note that by definition, $v_1 \in \mathcal{S}$.  For $1 \leq i \leq k$, consider $N_{i,j}$ satisfying:
\begin{align}
&0 \leq N_{1,j} < N_{2,j} < \dots < N_{k,j} \leq p-1\; \; \text{ for } 0 \leq j \leq e-2, \label{eq1} \\
&N_{i,e} = 0, \label{eq2}
\\
&\frac{p}{2^{l_{v_i} +1}} \leq N_{i,e-1} \leq \frac{(p-2^{l_{v_i}})}{2^{l_{v_i}}} \text{ for } v_i \in \mathcal{S}, \label{eq3} \text{ and }\\
&\frac{p}{2^{l_{v_i}+1}} \leq N_{i,e-1} < N_{i-1,e-1} \text{ for } v_i \notin \mathcal{S}. \label{eq4}
\end{align}
For $v \notin \{v_1, \dots, v_k\}$, consider $N(v)$ satisfying:
\begin{align}
    &N(v) = 0 \text{ if } v \in \hat{P}_{min}, \label{eq5} \\
    &\frac{(\pe)}{2^{l_v +1}} \leq N(v) \leq \frac{(\pe)}{2^{l_v}} \text{ if } v \in \mathcal{S}, \label{eq6} \text{ and }\\
    &\frac{(\pe)}{2^{l_v+1}} \leq N(v) \leq N(\tilde{v}) \text{ for } \tilde{v} \lessdot v, \text{ if }v \in \hat{P}_{nonmin} - \mathcal{S} \label{eq7} 
\end{align}
where  we note that if $v \in \hat{P}_{nonmin} -\mathcal{S}$ then $\tilde{v}$ is uniquely defined.

For $1 \leq i \leq k$, we can check that (\ref{eq1})-(\ref{eq4}) ensure 
\begin{align}
    \frac{(\pe)}{2^{l_{v_i} +1}} \leq N(v_i) &\leq \frac{(\pe)}{2^{l_{v_i}}} \text{ for } v_i \in \mathcal{S} \text{ and }\label{lb2}\\
    \frac{(\pe)}{2^{l_{v_i}+1}} \leq N(v_i) &\leq N(v_{i-1}) \text{ for } v_{i-1} \lessdot v_i, v_i \notin \mathcal{S}.\label{lb3}
\end{align}

To see this, note that for $1 \leq i \leq k$:

\begin{align*}
    N(v_i) &= N_{i,0} + \dots + N_{i,e}p^e \\
    &\geq \frac{p}{2^{l_{v_i} +1}}p^{e-1}\\
    &\geq \frac{(p^e - 1)}{2^{l_{v_i} +1}}
\end{align*}
If $v_i \in \mathcal{S}$, we also have:
\begin{align*}
    N(v_i) &= N_{i,0} + \dots + N_{i,e}p^e \\
    &\leq (p^{e-1} -1) + \frac{(p-2^{l_{v_i}})}{2^{l_{v_i}}}p^{e-1}\\
    &=\frac{p^e}{2^{l_{v_i}}} -1 \\
    &\leq \frac{(p^e - 1)}{2^{l_{v_i}}}
\end{align*}
and if  $v_i \notin S$, we have:
\begin{align*}
    N(v_i) &= N_{i,0} + \dots + N_{i,e}p^e \\
    &\leq (p^{e-1} -1) + (N_{i-1,e-1}-1)p^{e-1}\\
    &=N_{i-1,e-1}p^{e-1} - 1 \\
    &\leq N(v_{i-1})
\end{align*}
as desired.

Then in particular, by the argument in the proof of Theorem \ref{orderofeq}, all $N$ which satisfy $(\ref{eq1}) - (\ref{eq7})$ must also satisfy $(\ref{one})$ and $(\ref{two})$.  Now we must show that these $N$ also cannot split as $N(v) = N''(v)p^{e'} + N'(v)$.

We note that for all such $N$, we have $N(v_0) = 0$ since $v_0 \in \hat{P}_{min}$.  Also, since $N$ satisfies (\ref{one}) and (\ref{two}), and $v_0 \lessdot v_1 \lessdot \dots \lessdot v_k$, we have
\begin{align*}
    0 \leq &N(v_1) \leq \disp(v_0,v_1)(\pe)
\\
0 \leq &N(v_2) \leq N(v_1) + \disp(v_1,v_2)(\pe)
\\&\vdots
\\
0 \leq &N(v_k) \leq N(v_{k-1}) + \disp(v_{k-1},v_k)(\pe).
\end{align*}
We assumed $\sum_{i=1}^k \disp(v_{i-1},v_i) < k$.  Note that (\ref{eq1}) ensures
$$
[N(v_i)]_{p^{e'}} > [N(v_{i-1})]_{p^{e'}} \text{ for all } 1 < i \leq k \text{ and all } 0 < e' < e
$$
Then by Proposition \ref{onechainte}, $N$ will not split as $N(v) = N''(v)p^{e'} + N'(v)$  for any $N', N''$ since $N(v_i)$ cannot split for $1 \leq i \leq k$.

We now count how many $N$ satisfy (\ref{eq1})-(\ref{eq7}).
There are $\binom{p}{k}$ choices of $N_{1,j},\dots ,N_{k,j}$ for each $0 \leq j \leq e-2$ satisfying (\ref{eq1}) and $1$ choice of $N_{1,e},\dots,N_{k,e}$ satisfying (\ref{eq2}).  By the argument in Theorem \ref{orderofeq}, (\ref{eq3}) and (\ref{eq4}) have broken $v_1,\dots,v_k$ up into disjoint upwards paths so that $N_{i,e-1}$ satisfy inequalities of the form
$$
\frac{p}{2^{l +1}} \leq N_{i_m,e-1} <\dots< N_{i_1,e-1} \leq \frac{p-2^l}{2^l} 
$$
where $v_{i_1} \lessdot \dots \lessdot v_{i_m}$ is an upwards path with $v_{i_1} \in \mathcal{S}$, and all other $v_{i_j} \notin \mathcal{S}$.

Then in particular, for each such path of length $m$, we have 
$$\binom{\frac{p - 2^{l+1}}{2^{l + 1}}}{m} = \Theta(p^m)$$
options for $N_{i_1,e-1},\dots,N_{i_m,e-1}$.  Then in particular, we get $\Theta(p^k)$ options for $N_{1,e-1},\dots,N_{k,e-1}$ satisfying (\ref{eq3}) and (\ref{eq4}), for a total of $\Theta(p^{ke})$ options for $N(v_1),\dots,N(v_k)$ satisfying (\ref{eq1}) -(\ref{eq4}).

By the argument in the proof of Theorem \ref{orderofeq}, there are $\Theta(p^{e|\hat{P}_{nonmin}| - k})$ choices of $N(v)$ satisfying (\ref{eq5})-(\ref{eq7}) for $v \notin \{v_1,...,v_k\}$, for a total of $\Theta(p^{e|\hat{P}_{nonmin}|})$ choices of all $N$ satisfying (\ref{eq1}) - (\ref{eq7}).

\textbf{Upper bound:} An upper bound is given by Theorem \ref{orderofeq}, and again is $\Theta(p^{e|\hat{P}_{nonmin}|})$
\end{proof}

\begin{example}
We saw in Example \ref{segre} that if $R$ is the Segre product $S_{m,n}$, where $m,n \geq 2$, then $R = \mathcal{R}_k[\mathcal{I}(P)]$ where $P$ is the poset which is a chain of $n-1$ elements and a chain of $m-1$ elements.  We will see in Proposition \ref{musthavemixedpath} that Segre products of polynomial rings are $\omega^{(-1)}$-level.  Then in particular, when $m > n \geq 2$, $\hat{P}_{nonmin}$ is the chain of $m-1$ elements, and
\begin{equation*}
    \lim_{p \rightarrow \infty} \mathcal{R}_{\mathbb{F}_p}[\mathcal{I}(P)] = m-1
\end{equation*}
recovering the result of Enescu and Yao.
\end{example}

To finish the proof of our main theorem, we just need to prove Proposition \ref{levelcase}, which will require substantial setup, and which we do in the remainder of this section.

\begin{definition}
Let $p$ be a path in $P$. We say $p$ is an \textbf{upwards minimal mixed path} if $p$ is a mixed path and every upwards subpath of $p$ is a minimal length upwards path between its two endpoints.
\end{definition}

We will see that minimal generators of $\omega^{(-n)}$ with $\xi(-\infty) > -\dist(-\infty,\infty)n$ for non $\omega^{(-1)}$-level Hibi rings will correspond to upwards minimal mixed paths.  To see this, we will need the following definition:

\begin{definition}
Let $R = \mathcal{R}_k[\mathcal{I}(P)]$.  Let $\xw \in \omega^{(-n)}$, where as usual $\xi$ is a map $\xi: \hat{P} \rightarrow \mathbb{Z}$ with $\xi(\infty) = 0$.  Then, we know that we have $\xi(v_i) \geq \xi(v_j) - n$ for each $v_i \lessdot v_j \in \hat{P}$.  For each relation $v_i \lessdot v_j \in \hat{P}$ we define
\begin{equation*}
\Delta_{\w,i,j,n} :=  \xi(v_i) - (\xi(v_j) - n).
\end{equation*}
Note that $\Delta_{\w,i,j,n}  \geq 0$, and in particular, if $\xi(v_i) = \xi(v_j) - n$ for some $v_i \lessdot v_j$, we have $\Delta_{\w,i,j,n} = 0$.  When $\xi, n$ are implied, we will write $\Delta_{i,j}$ in place of $\Delta_{\w,i,j,n}$
\end{definition}

We will say $\xw \in \omega^{(-n)}$ is \textbf{compatible with} a path $p$ if $\Delta_{\w,i,j,n} = 0$ for all $v_i \lessdot v_j$ in any upwards subpath of $p$.

\begin{proposition} \label{musthavemixedpath}
Let $R =\mathcal{R}_k[\mathcal{I}(P)]$ be a Hibi ring with canonical $\omega$, and let $\xw \in \omega^{(-n)}$ such that $\xi(-\infty) > -\dist(-\infty,\infty)n$, where $\xi(\infty) = 0$.  Then $\xw$ is a minimal generator of $\omega^{(-n)}$ if and only if $\xw$ is compatible with some upwards minimal mixed path from $-\infty$ to $\infty$.
\end{proposition}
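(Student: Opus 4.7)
The strategy is to reformulate minimal generation as a combinatorial condition on $\hat{P}$ and then handle each direction of the equivalence. Writing $\xw = x_I \xwp$ with $I \in \mathcal{I}(P)$ and $\xwp \in \omega^{(-n)}$ amounts, via Corollary~\ref{generatorsforcan}, to choosing a poset ideal $\hat I = I \cup \{-\infty\}$ of $\hat P$ with $\infty \notin \hat I$ such that every Hasse cover $v_i \lessdot v_j$ with $v_i \in \hat I$, $v_j \notin \hat I$ satisfies $\Delta_{\w,i,j,n} \geq 1$. Hence $\xw$ is a minimal generator of $\omega^{(-n)}$ if and only if every such poset ideal $\hat I$ admits at least one boundary cover with $\Delta = 0$; let $E_0$ denote the set of all such zero-$\Delta$ covers of $\hat P$.

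For the $(\Leftarrow)$ direction, given a compatible upwards minimal mixed path $p = (u_0 = -\infty, u_1, \ldots, u_m = \infty)$ and any poset ideal $\hat I$ as above, the characteristic sequence $\chi_{\hat I}(u_0), \ldots, \chi_{\hat I}(u_m)$ starts at $1$ and ends at $0$, forcing some transition $u_i \in \hat I$, $u_{i+1} \notin \hat I$. Downward closedness of $\hat I$ rules out $u_{i+1} \lessdot u_i$, so $u_i \lessdot u_{i+1}$ is upward, and compatibility of $\xw$ with $p$ forces $\Delta = 0$ on this cover, giving an $E_0$ boundary cover as required.

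For the $(\Rightarrow)$ direction, assume $\xw$ is a minimal generator with $\xi(-\infty) > -\dist(-\infty,\infty)n$, and let $A \subseteq \hat P$ be the set of vertices reachable from $-\infty$ by paths consisting solely of $E_0$-upward covers. Telescoping gives $\xi(v) - \xi(-\infty) = n \cdot (\text{length})$ along any such path from $-\infty$ to $v$; combined with the bound $\xi(v) - \xi(-\infty) \leq n \cdot \dist(-\infty,v)$ (from summing $\Delta \geq 0$ over a minimum-length upward path), every $E_0$-upward path from $-\infty$ has length equal to $\dist(-\infty, v)$. In particular $\infty \in A$ would force $\xi(-\infty) = -\dist(-\infty,\infty)n$, contradicting the hypothesis, so $\infty \notin A$. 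The downward closure $\hat I_1 = \{x \in \hat P : x \leq w \text{ for some } w \in A\}$ is then a poset ideal containing $-\infty$ but not $\infty$, and minimal generation supplies an $E_0$ boundary cover $u \lessdot v$ with $u \in \hat I_1$, $v \notin \hat I_1$. Since $A$ is closed under $E_0$-upward extension, $u \notin A$; so $u < w$ for some $w \in A$. Begin the path with three segments: an $E_0$-upward path from $-\infty$ to $w$ (of length $\dist(-\infty, w)$), a shortest downward Hasse path from $w$ to $u$, and the cover $u \lessdot v$. Iterate: replace $-\infty$ by $v$ and $A$ by the set of vertices reachable from $v$ via $E_0$-upward paths; if $\infty$ is now reachable, complete the path by a final $E_0$-upward segment to $\infty$, otherwise repeat the three-segment construction. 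Finiteness of $\hat P$ forces termination at $\infty$, and the first descent $w \to u$ makes the resulting path mixed.

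The main obstacle in $(\Rightarrow)$ is ensuring that the constructed walk is a simple (non-self-intersecting) path whose maximal upward subsegments are genuinely minimum-length upward paths in $\hat P$ between their endpoints. The telescoping argument rigidly controls the length of each $E_0$-upward ascent in isolation, but gluing the stage-(iii) cover $u \lessdot v$ to the subsequent $E_0$-ascent from $v$ requires $\disp(u, v) = 0$ with respect to the new segment's endpoint, and iterated applications may revisit earlier vertices. The plan is to address these issues by choosing each Hasse descent to be shortest possible, by applying a shortcutting argument whenever a revisit arises, and by exploiting the length-rigidity of $E_0$-upward paths to recover the minimum-length property across the shortcut; making this bookkeeping rigorous is the core technical step.
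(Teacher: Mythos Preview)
Your $(\Leftarrow)$ argument is correct and in fact cleaner than the paper's: the characteristic-sequence trick and the observation that downward closedness forces any $1\to 0$ transition along the path to be an upward cover with $\Delta=0$ is exactly the right idea.

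For $(\Rightarrow)$ you take a genuinely different route from the paper. The paper defines the family $\mathscr{A}$ of poset ideals of $\hat P$ whose generators are all reachable from $-\infty$ by compatible paths, picks a \emph{maximal} element $I\in\mathscr{A}$, and shows $\infty\in I$ by a one-step extension argument; this sidesteps both termination and self-intersection in one move. Your approach is an iterative construction, and that is where the gap lies.

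The sentence ``Finiteness of $\hat P$ forces termination at $\infty$'' is not justified. At stage $k$ you set $A_k=\{E_0\text{-upward reachable from }v_{k-1}\}$ and $\hat I_k$ its downward closure, but there is no containment $\hat I_k\subseteq\hat I_{k+1}$, so nothing prevents the sequence $v_1,v_2,\ldots$ from cycling. The easy fix is to make the construction cumulative: let $A_k$ be the set of vertices $E_0$-upward reachable from \emph{any} of $v_0,\ldots,v_{k-1}$. Then the $\hat I_k$ are strictly increasing (since $v_k\notin\hat I_k$ but $v_k\in A_{k+1}\subseteq\hat I_{k+1}$), and finiteness really does force $\infty\in A_k$ eventually. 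Note that with this change the peak $w$ you descend from may lie over some earlier $v_{j-1}$ rather than $v_{k-1}$, so you must backtrack along the walk before ascending to $w$; this is precisely what the paper's maximal-ideal argument packages abstractly.

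On your final paragraph: you are making the bookkeeping harder than it needs to be. Once you have \emph{any} walk from $-\infty$ to $\infty$ in which every upward cover lies in $E_0$, the shortcutted simple path has the same property (each surviving cover was already a cover of the walk), so it is compatible with $\xw$. Upwards minimality is then automatic and need not be maintained during the construction: for any $E_0$-upward segment from $a$ to $b$ of length $\ell$, telescoping gives $\xi(b)-\xi(a)=n\ell$, while summing $\Delta\ge 0$ along a shortest upward path gives $\xi(b)-\xi(a)\le n\,\dist(a,b)$, whence $\ell=\dist(a,b)$. In particular your worry about ``$\disp(u,v)=0$'' when gluing $u\lessdot v$ to the next ascent is a non-issue: the combined ascent is itself entirely $E_0$-upward, so the same argument applies. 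Finally, the shortcutted path cannot be purely upward, since that would force $\xi(-\infty)=-\dist(-\infty,\infty)\,n$, contrary to hypothesis; so it is mixed.
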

\begin{proof}
$(\leftarrow)$ Suppose $\xw \in \omega^{(-n)}$ with $\xi(-\infty) > -\dist(-\infty,\infty)n$ and $\xi(\infty) = 0$, and  suppose $\xw$ is compatible with some upwards minimal mixed path $p$ from $-\infty$ to $\infty$.  If $\xw$ is not a minimal generator of $\omega^{(-n)}$, then there must be some poset ideal $I \subset P$ such that  $\frac{\xw}{x_I} \in \omega^{(-n)}$.  Denote $\frac{\xw}{x_I} := \xwp$.  We will show that $x_I = x_P$ which will lead to a contradiction.

Note that if $v_i \lessdot v_j$ and $\Delta_{i,j}  = 0$, then if $x_i| x_I$, we must also have $x_j| x_I$, since otherwise $\xwp \not\in \omega^{(-n)}$ since it will violate the requirement $\xi'(v_i) \geq \xi'(v_j) - n$.  
We know $t| x_I$ so that if $v_i$ is in the first upwards subpath of $p$, we must have $x_i | x_I$ as well by iterating the previous argument, since $\xw$ is compatible with $p$.  Assume this subpath ends in $v_i$.  
Then for any $v_j$ in the first downwards subpath of $p$, we have $v_j \leq v_i$ so that $x_j | x_I$ since any poset ideal containing $v_i$ must also contain $v_j$.  Continuing as such, we have $x_l | x_I$ for all $v_l \in p$ so that since $p$ ends at $\infty$, we have $x_P| x_I$ and $I = P$.  
However, this is impossible.  To see this, consider $v_k \in p$ such that $v_k \lessdot \infty$.  We have assumed that $\xi(v_k) = -n$, (since $\Delta_{k,\infty} = 0$ and $\xi(\infty) = 0$) and so we cannot have $x_k | x_I$ and also have $\xwp \in \omega^{(-n)}$, since we have will violate the requirement that $\xi'(v_k) \geq -n$.

($\rightarrow$)  
Suppose $\xw$ is a minimal generator of $\omega^{(-n)}$, such that $\xi(-\infty) > -\dist(-\infty,\infty)n$ and let $\xi(\infty) = 0$.
Let $\mathscr{A}$ be the family of poset ideals $I \subset \hat{P}$ such that there exists a choice of generators $I = \left<v_{I,i}\right>$ of $I$ such that for each $i$, there exists a mixed or upwards path $p$ starting at $-\infty$ and ending at $v_{I,i}$, and $\xw$ is compatible with $p$.  We show that $\infty \in I$ for some $I \in \mathscr{A}$. Then because $\infty$ is maximal, it must be a generator of $I$ and there is a mixed or upwards path compatible with $\xw$ from $-\infty$ to $\infty$.  

Suppose for contradiction that $\infty \notin I$ for any $I \in \mathscr{A}$.  Let $I$ be a maximal element in $\mathscr{A}$ with respect to inclusion, and let $\{v_{I,i}\}$ be the choice of generators of $I$ with the desired property.  Since $\xw$ is a minimal generator of $\omega^{(-n)}$ over $R$, we know $\frac{\xw}{x_I} \notin \omega^{(-n)}$.  Then we must have some $v_j \lessdot v_k$ with $v_j \in I$, $v_k \notin I$ and $\Delta_{j,k} = 0$ (since otherwise $\Delta_{j,k} > 0$ for all such $v_j, v_k$ and so $\frac{\xw}{x_I} \in \omega^{(-n)}$).  Since $v_j \in I$, we must have $v_j \leq v_{I,i}$ for some $i$.
\\
Case 1: $v_j = v_{I,i}$.  In this case, by construction we have a mixed or upwards path $p$ starting at $-\infty$ and ending at $v_j$ with $\Delta_{r,s}=0$ along its upward subpaths.  If $v_k \in p$, we have a mixed path starting at $-\infty$ and ending at $v_k$ with $\Delta_{r,s}  = 0$ along upwards subpaths by just truncating $p$.  Otherwise, if $v_k \notin p$, then we can extend $p$ by adding the upwards segment between $v_j$ and $v_k$ to get a mixed or upwards path starting at $-\infty$ and ending at $v_k$ such that along all its upward subpaths, $\Delta_{r,s}  = 0$.  Then the ideal generated by $\{v_{I,i}\} \cup v_k$ is in $\mathscr{A}$ and contains $I$, giving us a contradiction since we assumed $I$ was maximal.
\\
Case 2: $v_j < v_{I,i}$.  Again, we have a mixed or upwards path $p$ starting at $-\infty$ and ending at $v_{I,i}$ such that along upwards subpaths, $\Delta_{r,s}  = 0$.  If $v_k \in p$, we truncate $p$ as before to get such a path ending instead at $v_k$.  Likewise, if $v_j \in p$, we truncate $p$ at $v_j$ and then add the upwards subpath between $v_j$ and $v_k$.  Otherwise, if $v_j,v_k \notin p$, pick $v_l$ such that $v_l \in p$ and $v_j < v_l$, and such that $v_l$ is minimal with this property (so that there is no $v_s \in p$ with $v_j < v_s < v_l$).  Then the downwards subpath between $v_l$ and $v_j$ is not in $p$ by construction so we truncate $p$ at $v_l$, add this downwards subpath, and then add the upwards subpath from $v_j$ to $v_k$. This produces another mixed path starting at $-\infty$ and ending at $v_k$ which is compatible with $\xw$.  Then again, the ideal generated by $\{v_{I,i}\} \cup v_k$ is in $\mathscr{A}$ and contains $I$, giving us a contradiction.

Now we know there is a mixed or upwards path with the desired property starting at $-\infty$ and ending at $\infty$.  We need to show that $p$ is mixed.  If $p$ is an upwards path, then since $\xw$ is compatible with $p$, we have $\xi(-\infty) = -\len(p)n$.  In particular, since we must have $\len(p) \geq \dist(-\infty,\infty)$, we have $\xi(-\infty) \leq -\dist(-\infty,\infty)n$ so that since $\xi(-\infty) \geq -\dist(-\infty,\infty)n$ we have $\xi(-\infty) = -\dist(-\infty,\infty)n$. Since we assumed $\xi(-\infty) > -\dist(-\infty,\infty)n$ this is impossible, so in fact $p$ must be mixed.

Finally, we need to show that $p$ is upwards minimal.  Consider some upwards segment $u$ of $p$ starting at $v_i$ and ending at $v_j$.
Note that by iterating $\xi(v_i) \geq \xi(v_k) -n$ for $v_i \lessdot v_k$ along a minimal upwards path between $v_i$ and $v_j$, we get that 
\begin{equation*}
    \xi(v_i) \geq \xi(v_j) - \dist(v_i,v_j)n
\end{equation*}
If we have $\Delta_{r,s}  = 0$ for all $v_r \lessdot v_s$ along $u$, we get that
\begin{equation*}
    \xi(v_i) = \xi(v_j) - \len(u)n.
\end{equation*}
Then, in particular, we must have
\begin{equation*}
    \len(u) \leq \dist(v_i,v_j),
\end{equation*}
but since $\dist(v_i,v_j)$ is minimal, we have that
\begin{equation*}
    \len(u) = \dist(v_i,v_j).
\end{equation*}
Thus every upwards segment of $p$ must have minimal length.
\end{proof}

It will be useful to define an ordering on $p$--that is a notion of which vertex ``comes first" when traveling from $-\infty$ to $\infty$ along $p$.  We make the following definition:

\begin{definition}
Fix a mixed path $p$, and fix $v_i \neq v_j \in p$ (recall by definition $p$ has no self intersections).  We say
\begin{equation*}
    v_i \prec_p v_j
\end{equation*}
if $v_i$ comes before $v_j$ when we travel along $p$, i.e., $p = (\dots v_i \dots v_j \dots)$.  Note that it is not necessary that $v_i \leq v_j$.
\end{definition}

\begin{definition}
Fix a mixed path $p$ and points $v_i \prec_p v_j$ in $p$.  Let $\{u_k\}$ be the set of upwards subpaths of $p$ between $v_i$ and $v_j$ along $p$ and let $\{d_l\}$ be the set of downwards subpaths of $p$ between $v_i$ and $v_j$ along $p$.  We define the \textbf{mixed length} of $p$ between $v_i$ and $v_j \in p$ denoted $\mlen_p(v_i,v_j)$ to be
\begin{equation*}
    \mlen_p(v_i,v_j) := \sum_k \len(u_k) - \sum_l \len(d_l)
\end{equation*}
\end{definition}

We are now ready to prove Proposition \ref{levelcase}, which we restate here for convenience:

\begin{prop:levelcase}
If $R = \mathcal{R}_k[\mathcal{I}(P)]$ is $\omega^{(-1)}$-level but not Gorenstein, then we have some upwards path $(v_0,\dots,v_k)$ of length $k$ such that $(v_1, \dots, v_k)$ is a maximal length upwards path in $\hat{P}_{nonmin}$ and 
$$\sum_{i = 1}^k \disp(v_{i-1},v_i) < k.$$
\end{prop:levelcase}
\begin{proof}
Suppose $R= \mathcal{R}_k[\mathcal{I}(P)]$ is $\omega^{(-1)}$-level and suppose for contradiction that no maximal length upwards path in $\hat{P}_{nonmin}$ satisfies the property above.

Pick a maximal length upwards path $p = (v_1,\dots,v_k)$ in $\hat{P}_{nonmin}$,
(which we can do since $R$ is not Gorenstein).  Pick any $v_0 \lessdot v_1$.  
Since we picked $p$ to be maximal, we must have that $v_0 \in \hat{P}_{min}$. Then $ (v_0,\dots,v_k)$ is an upwards path of length $k$, and we have assumed that
$$\sum_{i=1}^k \disp(v_{i-1},v_i) \geq k$$ 
Note that
\begin{align*}
    \sum_{i=1}^k \disp(v_{i-1}, v_i)
    &= \sum_{i=1}^k \dist(v_{i-1},v_i) + \dist(v_i,\infty) - \dist(v_{i-1},\infty) \\
    &= \sum_{i=1}^k 1 + \dist(v_i,\infty) - \dist(v_{i-1},\infty) \\
    &= \dist(v_k,\infty) - \dist(v_0,\infty) + k,
\end{align*}
so that 
\begin{equation*}
    \sum_{i=1}^k \dist(v_{i-1},v_i) \geq k
    \implies \dist(v_k,\infty) \geq \dist(v_0,\infty).
\end{equation*}

We will show that this will give us that $R$ is not $\omega^{(-1)}$-level, by constructing an upwards minimal mixed path and explicitly writing down a minimal generator $\xw$ of $\omega^{(-1)}$ which is compatible with it and has $\xi(-\infty) > -\dist(-\infty,\infty)$ where $\xi(\infty) = 0$.  First, note that we cannot have $v_k \lessdot \infty$, since then $\dist(v_k,\infty) = 1$ but since $v_0 \leq v_k \leq \infty$, we must have $\dist(v_0,\infty) > 1$ and so using the equation above we get $\dist(v_k,\infty) > 1$.

Pick an element $v_{k+1}$ covering $v_k$ which is on  a minimal path between $v_k$ and $\infty$, so that $v_k \lessdot v_{k+1}$ and $\dist(v_k,\infty) = \dist(v_{k+1},\infty) + 1$.
Note that $v_{k+1} \not= \infty$ by the previous paragraph. 
Consider a minimal length upwards path from $-\infty$ to $v_{k+1}$, and call this $u_1$.  
Then pick a minimal length upwards path from $v_0$ to $\infty$ and call this $u_2$.  
Let $d$ be the downwards path that goes from $v_k$ to $v_1$ along $p$.  
We will show that $q = u_1 \cup d \cup u_2$ is an upwards minimal mixed path which will correspond to a minimal generator $\xw$ of $\omega^{(-1)}$ with $\xi(-\infty) > -\dist(-\infty,\infty)$.

First, we show that $u_1$ and $u_2$ will not intersect.  Suppose they do.  Break them up into segments $u_1 = u_1' \cup u \cup u_1''$ and $u_2 = u_2'' \cup u \cup u_2'$ as pictured, and let $w$ be the largest point in $u$, i.e. the largest point in both $u_1$ and $u_2$.
Note that if $u_1$ and $u_2$ were to intersect in a disjoint set of elements, their lengths between their intersection points would be the same by construction, so that we could instead consider $u_1$ and $u_2$ that intersect in one connected path as pictured.

\begin{center}
\begin{tikzpicture}
\draw (0,0)-- (-2,-2) -- (-2,-4)-- (0,-5)-- (0,-1) -- (-2,-2) -- (-2,-4) -- (0,-6);
\draw[fill] (0,0) circle [radius=0.1];
\draw[fill] (0,-1) circle [radius=0.1];
\draw[fill] (0,-2) circle [radius=0.1];
\draw[fill] (0,-4) circle [radius=0.1];
\draw[fill] (0,-5) circle [radius=0.1];
\draw[fill] (0,-6) circle [radius=0.1];
\draw[fill] (-2,-2) circle [radius=0.1];
\node [right] at (0,-6) {$-\infty$};
\node [right] at (0,-5) {$v_0$};
\node [right] at (0,-4) {$v_1$};
\node [right] at (0,-2) {$v_k$};
\node [right] at (0,-1) {$v_{k+1}$};
\node [right] at (0,0) {$\infty$};
\node [left] at (-1.2,-1) {$u_2'$};
\node [left] at (-2,-3) {$u$};
\node [right] at (-1.2,-4.2) {$u_2''$};
\node [left] at (-1.1,-5) {$u_1'$};
\node [right] at (-1.2,-1.7) {$u_1''$};
\node [right] at (0,-3) {$d$};
\node [left] at (-2,-2) {$w$};
\end{tikzpicture}
\end{center}
Since $w$ is on a minimal length path from $-\infty$ to $v_{k+1}$, and $v_{k+1} \in \hat{P}_{min}$, we have:
\begin{equation*}
    \len(u_1'') + \dist(v_{k+1},\infty) = \dist(w,\infty) = \len(u_2')
\end{equation*}
Then in particular,
\begin{align*}
    \dist(v_0,\infty) &= \len(u_2'') + \len(u) + \len(u_2') \\
    &= \len(u_2'') + \len(u) + \len(u_1'') + \dist(v_{k+1},\infty),
\end{align*}
and since $u_2'' \cup u \cup u_1''$ is a path between distinct points, it must have at least length $1$.  Then we have
\begin{equation*}
    \dist(v_0,\infty) \geq \dist(v_{k+1},\infty) +1 = \dist(v_k,\infty).
\end{equation*}
Then by our assumption we must have $\dist(v_k,\infty) = \dist(v_0,\infty)$, so that
\begin{align*}
    \dist(v_{k+1},\infty)+1 
    &= \dist(v_0,\infty) \\
    &= \len(u_2'') + \len(u) + \len(u_1'') + \dist(v_{k+1},\infty) 
\end{align*}
and $\len(u_2'') + \len(u) + \len(u_1'') = 1$.  However, $u_2'' \cup u \cup u_1''$ is a path from $v_0$ to $v_{k+1}$, so this tells us that $v_0 \lessdot v_{k+1}$, which is impossible since there is already a path of length $k+1$ between $v_0$ and $v_{k+1}$.  Thus, we can assume that $q$ does not self intersect.  It is already clear that $u_1$ and $u_2$ cannot intersect $d$, since $d$ lies entirely in $\hat{P}_{nonmin}$, and $u_1,u_2$ lie entirely in $\hat{P}_{min}$.

\begin{center}
\begin{tikzpicture}
\draw (0,0)-- (-2,-2) -- (0,-5)-- (0,-1) -- (2,-4) -- (0,-6);
\draw[fill] (0,0) circle [radius=0.1];
\draw[fill] (0,-1) circle [radius=0.1];
\draw[fill] (0,-2) circle [radius=0.1];
\draw[fill] (0,-4) circle [radius=0.1];
\draw[fill] (0,-5) circle [radius=0.1];
\draw[fill] (0,-6) circle [radius=0.1];
\node [right] at (0,-6) {$-\infty$};
\node [right] at (0,-5) {$v_0$};
\node [right] at (0,-4) {$v_1$};
\node [right] at (0,-2) {$v_k$};
\node [right] at (0,-1) {$v_{k+1}$};
\node [right] at (0,0) {$\infty$};
\node [left] at (-2,-2) {$u_2$};
\node [left] at (2,-4) {$u_1$};
\node [right] at (0,-3) {$d$};
\end{tikzpicture}
\end{center}

Then our mixed path $q$ looks like the picture above, and by construction it is upwards minimal.  We will explicitly construct $\xw \in \omega^{(-1)}$ compatible with $q$ (so that it is a minimal generator of $\omega^{(-1)}$), which has $\xi(-\infty) > -\dist(-\infty,\infty)$.  Define $\xw$ by:
\begin{align*}
    \xi(y) &= -\dist(y,\infty) = -\mlen_q(y,\infty) \text{ for } y \in u_2, \\
    \xi(v_{k+1}) &= \min\{\dist(y,z) + \dist(z,v_{k+1})-\mlen_q(y,\infty)  | y < z \leq v_{k+1}, y,z \in q, z \prec_q y\}, \\
    \xi(y) &= \max\{ -\dist(y,\infty), \xi(v_{k+1}) -\dist(y,v_{k+1}) \} \text{ for } y \in d, \\
    \xi(y) &=  \xi(v_{k+1}) -\dist(y,v_{k+1})\text{ for } y \in u_1, y \neq v_{k+1}, \text{ and} 
    \\
    \xi(y) &= \min\{ \xi(x) + \dist(x,y) | x \leq y, x \in q \} \text{ for } y \notin q.
\end{align*}

We first show that $\xi(v_{k+1}) > -\dist(v_{k+1},\infty)$.

By definition, $\xi(v_{k+1}) = \dist(y,z) + \dist(z,v_{k+1})-\mlen_q(y,\infty)$ for some $y,z$ such that $y < z \leq v_{k+1}$ and $z \prec_p y$. 
Note that this set of $y,z$ is nonempty since we have $v_0 \leq v_k \leq v_{k+1}$ and $v_k \prec_p v_0$. 
We cannot have $y \in u_1$ since $u_1$ is an upwards path, so  we have either $y \in u_2$ or $y \in d$. 

If $y \in u_2$ and $z \neq v_{k+1}$ then:
\begin{align*}
    \xi(v_{k+1}) &= \dist(y,z) + \dist(z,v_{k+1}) -\dist(y,\infty)\\
    &\geq  \dist(y,z) + \dist(z,v_{k+1})-\dist(v_0,\infty)\\
    &\geq \dist(y,z) + \dist(z,v_{k+1})-\dist(v_k,\infty)\\
    &>1-\dist(v_k,\infty) \\
    &= -\dist(v_{k+1},\infty)
\end{align*}

If $y \in u_2$ and $z = v_{k+1}$ then:
\begin{align*}
    \xi(v_{k+1}) &= \dist(y,z)-\dist(y,\infty) \\
    &\geq  \dist(y,z) -\dist(v_0,\infty)\\
    &\geq \dist(y,z) -\dist(v_k,\infty)\\
    &\geq  1 -\dist(v_k,\infty)\\
    &= -\dist(v_{k+1},\infty)
\end{align*}
but either the first inequality is strict if $y \neq v_0$, or the third inequality is strict if $y = v_0$, and either way $ \xi(v_{k+1}) > -\dist(v_{k+1},\infty)$.

Otherwise, if $y \in d$ then
\begin{align*}
    \xi(v_{k+1}) &=\dist(y,z) + \dist(z,v_{k+1}) -\dist(v_0,\infty) -\mlen_q(y,v_0)\\
    &> \dist(y,z) + \dist(z,v_{k+1})-\dist(v_0,\infty)\\
    &\geq \dist(y,z) + \dist(z,v_{k+1})-\dist(v_k,\infty)\\
    &\geq  1 -\dist(v_k,\infty)\\
   & = -\dist(v_{k+1},\infty).
\end{align*}

Now we have shown that $\xi(v_{k+1}) > -\dist(v_{k+1},\infty)$.  Then in particular, we have
\begin{align*}
\xi(-\infty) &= \xi(v_{k+1}) - \dist(-\infty,v_{k+1}) \\
&> -\dist(v_{k+1},\infty) - \dist(-\infty,v_{k+1}) \\
&= -\dist(-\infty,\infty)
\end{align*}
as desired.  
Now we have checked that $\xi(-\infty) > -\dist(-\infty,\infty)$.
In particular, if we can show $\xw$ is a minimal generator of $\omega^{(-1)}$, this tells us that $R$ is not $\omega^{(-1)}$ level.  
It suffices to check that $\xw \in \omega^{(-1)}$ and that $\xw$ is compatible with $q$, (since by definition $\xi(\infty) = -\dist(\infty,\infty) = 0$).

By construction of $\xw$, $\xw$ is compatible with $q$, so that $\xw$ is a minimal generator of $\omega^{(-1)}$ as long as $\xw \in \omega^{(-1)}$ by Proposition \ref{musthavemixedpath}.  
To check that $\xw \in \omega^{(-1)}$ we must check that for every $y \lessdot z \in \hat{P}$, we have $\xi(y) \geq \xi(z) -1$. 

First, we check this for $y,z \in q$.  We will actually show the stronger statement: $\xi(y) - \xi(z) \geq -\dist(y,z)$ for any $y \leq z$ with $y,z \in q$.

Note that by our definition, if $y \in q$ we have either
\begin{align*}
    \xi(y) &= \xi(v_{k+1}) - \dist(y,v_{k+1}) \text{ or} \\
    \xi(y) &= -\dist(y,\infty).
\end{align*}
Note the first of these equations holds trivially for $y = v_{k+1}$.  Fix $y,z \in q$ with $y \leq z$.   

Note that if $\xi(y) \geq -\dist(y,\infty)$ and $\xi(z) = -\dist(z,\infty)$ we have

\begin{align*}
    \xi(y) - \xi(z) &\geq -\dist(y,\infty) +\dist(z,\infty)\\
    &\geq -\dist(y,z)
\end{align*}
by (\ref{triangle}).

Similarly, if $\xi(y) \geq \xi(v_{k+1}) - \dist(y,v_{k+1})$ and $\xi(z) = \xi(v_{k+1}) - \dist(z,v_{k+1})$, we have:

\begin{align*}
    \xi(y) - \xi(z) &\geq \xi(v_{k+1}) - \dist(y,v_{k+1}) -\xi(v_{k+1}) + \dist(z,v_{k+1}) \\
    &= -\dist(y,v_{k+1}) + \dist(z,v_{k+1})\\
    &\geq -\dist(y,z)
\end{align*}
where again, the last equation holds by (\ref{triangle}).

We have four cases to check.

\textbf{Case 1}:  If $y$ and $z$ and both are in $u_1$ or both are in $u_2$, then by definition $\xi(y) = \xi(z) -\dist(y,z)$.

\textbf{Case 2:}
If $y \in u_2$, and $z \in d$ or $u_1$, we have $y \leq z$ and $z \prec_q y$ so that
\begin{align*}
    \xi(v_{k+1}) &\leq \dist(y,z) + \dist(z,v_{k+1}) -\mlen_q(y,\infty)
    \\
    &=\dist(y,z) + \dist(z,v_{k+1})-\dist(y,\infty).
\end{align*}

Then if in addition $\xi(z) = \xi(v_{k+1}) - \dist(z,v_{k+1})$, we have
\begin{align*}
    \xi(y) - \xi(z) &= -\dist(y,\infty) -\xi(v_{k+1}) + \dist(z,v_{k+1})\\
    & \geq -\dist(y,\infty) - \dist(y,z) - \dist(z,v_{k+1})+\dist(y,\infty) + \dist(z,v_{k+1})\\
    &= -\dist(y,z).
\end{align*}

We have already checked the case that $\xi(z) = -\dist(z,\infty)$, since $y \in u_2 \implies \xi(y) = -\dist(y,\infty)$.

\textbf{Case 3:}
Suppose $y \in d$ and $z \in u_1,d,$ or $u_2$.  Note that we have checked the case that $\xi(z) = \xi(v_{k+1}) - \dist(z,v_{k+1})$, since $y \in d \implies \xi(y) \geq \xi(v_{k+1})- \dist(y,v_{k+1})$.  Similarly, we have checked the case that  $\xi(z) = -\dist(z,\infty)$, since $y \in d \implies \xi(y) \geq -\dist(y,\infty)$.

\textbf{Case 4:}
Suppose $y \in u_1$ and  $z \in u_2$ or $d$.  We have checked the case that $\xi(z) = \xi(v_{k+1}) -\dist(z,v_{k+1})$, since $y \in u_1 \implies \xi(y) = \xi(v_{k+1}) - \dist(y,v_{k+1})$.  Otherwise, if $\xi(z) = -\dist(z,\infty)$  we have:
\begin{align*}
    \xi(y) - \xi(z) &= \xi(v_{k+1}) - \dist(y,v_{k+1}) + \dist(z,\infty) \\
    &> -\dist(v_{k+1},\infty)  - \dist(y,v_{k+1}) + \dist(z,\infty) \\
    &=-\dist(y,\infty) + \dist(z,\infty) \\
    &\geq -\dist(y,z)
\end{align*}
where the second equality holds because $y$ is on a minimal length path between $-\infty$ and $v_{k+1} \in \hat{P}_{min}$, and the last inequality holds by (\ref{triangle}).

We have checked that anytime $y \leq z$ with $y,z \in q$, we have $\xi(y) \geq \xi(z) - \dist(y,z)$.  In particular, if $y \lessdot z$ we have $\xi(y) \geq \xi(z) -1$ as desired.

Now suppose $y \lessdot z$ with $y, z \notin q$.  Then if $\xi(y) = \xi(x) + \dist(x,y)$ for some $x \in q$, we have $x \leq z$ so that
\begin{align*}
 \xi(z) &\leq \xi(x) + \dist(x,z)
 \\
 &\leq \xi(x) + \dist(x,y) + 1 \\
 &= \xi(y) + 1.   
\end{align*}
Similarly, if $y \lessdot z$ with $z \notin q, y \in q$, then we have
\begin{align*}
 \xi(z) &\leq \xi(y) + \dist(y,z)
 \\
 &= \xi(y) + 1  
\end{align*}
so that $\xi(y) \geq \xi(z) - 1$, as desired.

Finally, suppose $y \notin q, z \in q$.  We have
\begin{equation*}
    \xi(y) = \min\{ \xi(x) + \dist(x,y) | x < y, x \in q\}
\end{equation*}
(in particular, there is at least one such $x$ since $-\infty \in q$).  Say $\xi(y) = \xi(x) + \dist(x,y)$ for some $x \in q$.  Then we have
\begin{align*}
    \xi(y) - \xi(z) &= \xi(x) + \dist(x,y) - \xi(z) \\
    &\geq -\dist(x,z) + \dist(x,y) \\
    &\geq -\dist(y,z) \\
    &= -1,
\end{align*}
as desired, where the first inequality holds because $x,z \in q$ and $x \leq z \implies  \xi(x) - \xi(z) \geq -\dist(x,z)$, and the second inequality holds by (\ref{triangle}).  Now we have constructed $\xw \in \omega^{(-1)}$ such that $\xi(y) = \xi(z) -1$ for all $y \lessdot z$ along $u_1,u_2$ so that by Proposition \ref{musthavemixedpath}, $\xw$ is a minimal generator of $\omega^{(-1)}$.  Also, we have ensured $\xi(-\infty) > -\dist(-\infty,\infty)$ so that $R$ cannot be $\omega^{(-1)}$-level.
\end{proof}

\section{Frobenius complexity for non anticanonical level Hibi rings}
We end with a few comments on the non $\omega^{(-1)}$-level case, and we give an example.  We saw in Proposition \ref{musthavemixedpath} that minimal generators $\xw$ of $\omega^{(-n)}$ which do not have minimal degree (on $t$) correspond to upwards minimal mixed paths.  Inequalities describing these generators can be somewhat more cumbersome to write out in full generality, but similar methods for counting these generators and computing the Frobenius complexity are often sufficient. We demonstrate the method for  Example \ref{levelex} (1), and draw $\hat{P}$ below.
\begin{center}
\begin{tikzpicture}
\draw (0,1) -- (-2,0) -- (0,-2) -- (0,0) -- (2,-2) -- (0,-3);
\draw (0,1) -- (0,0);
\draw (0,-2) -- (0,-3);
\draw[fill] (0,0) circle [radius=0.1];
\draw[fill] (0,-1) circle [radius=0.1];
\draw[fill] (0,-2) circle [radius=0.1];
\draw[fill] (-2,0) circle [radius=0.1];
\draw[fill] (2,-2) circle [radius=0.1];
\draw[fill] (0,1) circle [radius=0.1];
\draw[fill] (0,-3) circle [radius=0.1];

\node [right] at (0,0) {$v_4$};
\node [right] at (0,-1) {$v_3$};
\node [right] at (0,-2) {$v_2$};
\node [right] at (-2,0) {$v_1$};
\node [right] at (2,-2) {$v_5$};
\node [right] at (0,1) {$\infty$};
\node [right] at (0,-3) {$-\infty$};
\end{tikzpicture}
\end{center}
Let $R =\mathcal{R}_{\mathbb{F}_p}[\mathcal{I}(P)]$.  Minimal generators of $\omega^{(-n)}$ correspond to upwards minimal mixed paths between $-\infty$ and $\infty$, and here there is exactly one: $p = (-\infty, v_5, v_4, v_3, v_2, v_1, \infty)$.  Let $\xw$ be a minimal generator of $\omega^{(-n)}$.  Since $\xi(\infty) = 0$, we have:
\begin{align*}
    \xi(v_1) &= -n \text{ and}\\
    \xi(v_2) &= -2n.
\end{align*}
Similarly, if we fix $\xi(v_4)$, we have:
\begin{align*}
    \xi(v_5) &= \xi(v_4) -n \text{ and} \\
    \xi(-\infty) &= \xi(v_5) - n,
\end{align*}
so it suffices to determine all possible values of $\xi(v_3)$ and $\xi(v_4)$.  We have:
\begin{align*}
    -2n &\leq \xi(v_3) \leq -n \\
    -n &\leq \xi(v_4) \leq \xi(v_3) + n\\
\end{align*}
Writing $N(v_3) := \xi(v_3) + 2n$ and $N(v_4) := \xi(v_4) + n$, we have:
\begin{align*}
    0 &\leq N(v_3) \leq n \text{ and}\\
    0 &\leq N(v_4) \leq N(v_3),
\end{align*}
and so we can see that $sp_R(\omega^{(-1)})-1 = 2$.  Also, if $n = p^{e}-1$, we have that by Proposition \ref{onechainte}, $\xw$ cannot split as $\xwpp*\xwp$ as long as $[N(v_4)]_{p^{e'}} > [N(v_3)]_{p^{e'}}$ for all $0 < e' < e$.  In particular, we can let
\begin{align*}
    0 &< N_{4,j} < N_{3,j} < p \text{ for } 0 \leq j \leq e-2 \\
    0 &\leq N_{3,e-1} < N_{4,e-1} < p \\
    &N_{3,e} = N_{4,e} = 0
\end{align*}
and let $N(v_i) := N_{i,0} + N_{i,1}p + \dots + N_{i,e}p^e$ for $i = 3,4$.  Then $\xw$ will not split as $\xwpp*\xwp$ for any $e' + e'' = e$, so that there are at least $\binom{p}{2}^e = \Theta(p^{2e})$ minimal generators $\xw$ of $\omega^{(1-p^e)}$ that cannot split as smaller degree terms and $\lim_{p \rightarrow \infty} cx_F(R) = 2$.  

In this case, we call $v_4$ a \textbf{peak} in $p$, as it is the last element in an upwards subpath of $p$ and the first element in a downwards subpath in $p$.  In general, for non $\omega^{(-1)}$ level Hibi rings, there will be some upwards minimal mixed path $p$ between $-\infty$ and $\infty$ in $\hat{P}$, and  $sp_R(\omega^{(-1)})-1$ will tell us the number of elements in $\hat{P}$ which are either peaks in $p$ or are not in upwards subpaths of $p$.  In every case we have computed, $\lim_{p \rightarrow \infty} cx_F(R)= sp_R(\omega^{(-1)})-1$, 
but we do not yet have an analogue of Proposition \ref{levelcase} to say that these numbers must always match up.  This leads us to the following question, which we hope to answer in future work.

\begin{question}
Given a Hibi ring $R$ (or more generally, any toric ring) over a field of characteristic $p$, is it always true that $\lim_{p \rightarrow \infty} cx_F(R)= sp_R(\omega_R^{(-1)})-1$?
\end{question}

\bibliographystyle{elsarticle-harv} 
\bibliography{bibliography}{}

\end{document}